\date{\today}
\let\expandafter\oldproof\csname\string\proof\endcsname
\let\oldendproof\endproof
\renewenvironment{proof}[1][\proofname]{%
	\oldproof[\bf #1]%
}{\oldendproof}
\newcommand{\ignore}[1]{}
\theoremstyle{plain}
\newtheorem{theorem}{Theorem}
\newtheorem{lemma}{Lemma}[section]
\newtheorem{corollary}[lemma]{Corollary}
\newtheorem{conjecture}[theorem]{Conjecture}
\newtheorem{definition}[lemma]{Definition}
\newtheorem{fact}[lemma]{Fact}
\newcommand{\tind}{t_{\mathrm{ind}}}
\newcommand{\tinj}{t_{\mathrm{inj}}}
\newcommand{\aut}{\mathrm{aut}}
\DeclareMathOperator{\poly}{poly}
\def\bar{\overline}
\def\({\left(}
\def\){\right)}
\def\<{\langle}
\def\>{\rangle}
\let\epsilon\varepsilon
\def\cF{\mathcal F}
\def\PP{\mathbb{P}}
\def\t{{\rm t}}
\definecolor{RED}{rgb}{1,0,0}\definecolor{BLUE}{rgb}{0,0,1} %DIF PREAMBLE
\title{Testing linear inequalities of subgraph statistics}
\author{Lior Gishboliner \thanks{School of Mathematical Sciences, Tel Aviv University, Tel Aviv, 69978, Israel. Email: liorgis1@mail.tau.ac.il.}
\and Asaf Shapira \thanks{
School of Mathematical Sciences, Tel Aviv University, Tel Aviv 69978, Israel.
Email: asafico$@$tau.ac.il. Supported in part by ISF Grant 1028/16 and ERC Starting Grant 633509.}
\and Henrique Stagni \thanks{Departamento de Ciencia da Computacao,
Instituto de Matematica e Estatistica, Universidade de Sao Paulo, Brazil. Email: stagni@gmail.com.}
}
\begin{document}

\date{}

\maketitle
\begin{abstract}

Property testers are fast randomized algorithms whose task is to distinguish between inputs satisfying some predetermined property ${\cal P}$
and those that are far from satisfying it.
Since these algorithms operate by inspecting a small randomly selected portion of the input,
the most natural property one would like to be able to test is whether the input does not
contain certain forbidden small substructures. In the setting of graphs, such a result
was obtained by Alon et al., who proved that for any finite family of graphs ${\cal F}$,
the property of being induced ${\cal F}$-free (i.e. not containing an induced copy of any $F \in {\cal F}$) is testable.

It is natural to ask if one can go one step further and prove that more elaborate properties
involving induced subgraphs are also testable. One such generalization of the result of Alon et al. was formulated by Goldreich and Shinkar
who conjectured that for any finite family of graphs ${\cal F}$, and any linear inequality involving the densities of the graphs $F \in {\cal F}$ in the input graph,
the property of satisfying this inequality can be tested in a certain restricted model
of graph property testing. Our main result in this paper disproves this conjecture in the following
strong form: some properties of this type are not testable even
in the classical (i.e. unrestricted) model of graph property testing.

The proof deviates significantly from prior non-testability results in this area. The main idea is to use a linear inequality
relating induced subgraph densities in order to encode the property of being a quasirandom graph.

\end{abstract}

%\begin{abstract}
%	Property testers are fast randomized algorithms whose task is to distinguish between inputs satisfying some predetermined property $\mathcal{P}$ and those that are far from satisfying it. A landmark result of Alon et al. states that for any finite family of graphs ${\cal F}$, the property of being induced ${\cal F}$-free (i.e. not
%	containing an induced copy of any $F \in {\cal F}$) is testable. Goldreich and Shinkar conjectured that one can extend this by showing that for any linear inequality involving the densities of the graphs $F \in {\cal F}$ in the input graph, the property of satisfying this inequality is testable.
%	Our main result in this paper disproves this conjecture.
%	
%	The proof deviates significantly from prior non-testability results in this area. The main idea is to use a linear inequality relating induced subgraph densities in order to encode the property of being a quasirandom graph.
%\end{abstract}

%\newpage

\section{Introduction}
Property testers are fast randomized algorithms that distinguish between objects satisfying a certain property and objects that are ``far" from the property. The systematic study of such problems originates in the seminal papers of Rubinfeld and Sudan \cite{RS} and Goldreich, Goldwasser and Ron \cite{GGR}, and has since become a very active area of research with strong ties to other areas; most notably discrete mathematics, and extremal combinatorics in particular. We refer the reader to the book of Goldreich \cite{Goldreich} for more background and references on the subject.

In this paper we study property testing of graph properties in the {\em dense graph model}. In this model, a graph is given as an $n \times n$ adjacency matrix. An $n$-vertex graph $G$ is said to be {\em $\varepsilon$-far} from a graph property $\Pi$, if one has to change at least $\varepsilon n^2$ entries in the adjacency matrix of $G$ in order to turn it into a graph satisfying $\Pi$. A {\em tester} for $\Pi$ is a (randomized) algorithm that, given a proximity parameter $\varepsilon > 0$ and a graph $G$, accepts if $G$ satisfies $\Pi$ and rejects if $G$ is $\varepsilon$-far from $\Pi$, with success probability at least $\frac{2}{3}$ in both cases. The tester is given oracle access to the adjacency matrix of the input, to which it makes queries. 
In this paper we focus on so-called {\em canonical} testers (see \cite{GT}). A tester is called canonical if it works as follows: given a proximity parameter $\varepsilon > 0$ and an $n$-vertex input graph $G$, the tester samples a set of $s(\varepsilon,n)$ vertices of $G$ uniformly at random, querying all pairs among these vertices, and makes its decision solely based on (the isomorphism class of) the subgraph induced by the sample. The function $s(\varepsilon,n)$ is called the {\em sample complexity} of the tester. Note that the number of edge-queries made by such a tester is $\binom{s(\varepsilon,n)}{2}$. The reason for restricting our attention to canonical testers is an important result of Goldreich and Trevisan \cite[Theorem 2]{GT} (see also \cite{GT_errata}), who showed that any tester can be transformed into a canonical one with only minor loss in efficiency. More precisely, they showed that if a property $\Pi$ has a tester with {\em query-complexity} $q(\varepsilon,n)$ --- i.e., if this tester makes at most $q(\varepsilon,n)$ edge-queries when invoked with proximity parameter $\varepsilon$ and with an $n$-vertex input graph --- then $\Pi$ also has a canonical tester with sample complexity $s(\varepsilon,n) = O(q(\varepsilon,n))$. 

%We say that a tester has query complexity $q(\varepsilon,n)$ if it makes at most $q(\varepsilon,n)$ queries when invoked with proximity parameter $\varepsilon$ on inputs with $n$ vertices. 
%A property $\Pi$ is called {\em testable} if it has a tester whose query complexity is bounded by a function of $\varepsilon$ alone, that is, it is independent of the size of the input. 
%%Unless stated otherwise, when saying that a property $\Pi$ is {\em testable} we mean that it has a tester whose query complexity is bounded by a function of $\varepsilon$ alone, that is, it is independent of the size of the input. 
%A tester is {\em canonical} if it works by sampling a random set of vertices of some size $s(\varepsilon,n)$, querying all pairs among these vertices, and making its decision based solely on (the isomorphism class of) the subgraph induced by the sample. The function $s(\varepsilon,n)$ is called the {\em sample complexity} of the tester. Goldreich and Trevisan \cite[Theorem 2]{GT} showed that every tester can be transformed to a canonical one, with the cost of possibly squaring the query complexity. 

A property $\Pi$ is called {\em testable} if it has a tester whose sample complexity is bounded by a function of $\varepsilon$ alone, that is, it is independent of the size of the input. 
A tester is {\em size-oblivious} if it does not know $n$; that is, if its operation depends only on the proximity parameter $\varepsilon$ (and not on the size of the input). The aforementioned transformation of Goldreich and Trevisan \cite{GT}, which turns arbitrary testers into canonical ones, preserves the property of being size-oblivious.
	
In this paper we study a special kind of testers, called {\em proximity oblivious testers}, which were first introduced and studied by Goldreich and Ron \cite{GR}, and are defined as follows.
\begin{definition}\label{def:POT}
A {\em proximity oblivious tester} (POT) for a graph property $\Pi$ is an algorithm which makes a {\em constant} (i.e. independent of $n$ {\em and} $\varepsilon$) number of queries to the input and satisfies the following. There is a constant $c \in (0,1]$ and a function
$f: (0,1] \rightarrow (0,1]$ such that:
\begin{enumerate}
		\item If the input graph satisfies $\Pi$ then the tester accepts with probability at least $c$.
		\item If the input graph is $\varepsilon$-far from $\Pi$ then the tester accepts with probability at most $c - f(\varepsilon)$.
	\end{enumerate}
	\end{definition}
	\noindent
	Observe that a POT for $\Pi$ can be used to obtain a standard tester for $\Pi$, by invoking the POT $T = \Theta(1/f(\varepsilon)^2)$ times and accepting if and only if the POT accepted in at least
	$(c - \frac{f(\varepsilon)}{2})T$ of \nolinebreak the \nolinebreak tests.
	
Goldreich and Ron \cite{GR} studied {\em one-sided-error POTs}, namely POTs that accept every input which satisfies the property with probability $1$ (this corresponds to having $c = 1$ in Definition \ref{def:POT}). Later, Goldreich and Shinkar \cite{GS} studied general (two-sided-error) POTs in several settings, including those of general boolean functions, dense graphs and bounded degree graphs. For the dense graph model, they designed a POT for the property of being $\alpha n$-regular (for a given $\alpha \in (0,1)$), as well as for several related properties. They moreover considered certain subgraph density properties, defined as follows. Given graphs $H,G$, the {\em density} of $H$ in $G$, denoted by $p(H,G)$, is the fraction of induced subgraphs of $G$ of order $|V(H)|$ which are isomorphic to $H$. We will need the following basic property of this subgraph \nolinebreak density \nolinebreak function:
\begin{fact}\label{fact:density_averaging}
	For every pair of graphs $F,G$ and $h \geq |V(F)|$, it holds that $p(F,G) = \sum_{H}{p(F,H) \cdot p(H,G)}$, where the sum is over all $h$-vertex graphs $H$.  
\end{fact}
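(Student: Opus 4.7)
The plan is to prove the identity by a standard two-step sampling / law of total probability argument. Let $f = |V(F)|$ and $n = |V(G)|$, and interpret $p(F,G)$ as the probability that a uniformly random $f$-subset $S \subseteq V(G)$ induces a graph isomorphic to $F$. The key observation is that choosing such an $S$ in one shot yields the same distribution as the following two-stage procedure: first pick a uniformly random $h$-subset $T \subseteq V(G)$, and then pick a uniformly random $f$-subset $S \subseteq T$. Indeed, both procedures put mass $1/\binom{n}{f}$ on each $f$-subset, which can be verified by noting that the number of pairs $(T,S)$ with $|T|=h$, $|S|=f$, $S \subseteq T \subseteq V(G)$ containing a fixed $f$-set $S_0$ is $\binom{n-f}{h-f}$, independent of $S_0$.

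Given this equivalent sampling, I would apply the law of total probability while conditioning on the isomorphism type of $G[T]$:
\[
p(F,G) = \Pr[G[S] \cong F] = \sum_H \Pr[G[T] \cong H] \cdot \Pr[G[S] \cong F \mid G[T] \cong H],
\]
where the sum is over all (isomorphism classes of) $h$-vertex graphs $H$. The first factor is exactly $p(H,G)$ by definition. For the second factor, I would argue that conditioning on $G[T] \cong H$ only fixes the isomorphism type of $G[T]$, while the random $f$-subset $S$ of $T$ remains uniform; transporting $S$ along an isomorphism $G[T] \to H$ shows that the induced subgraph $G[S]$ is isomorphic to the induced subgraph of $H$ on a uniformly random $f$-subset of $V(H)$, whose distribution of isomorphism types is exactly $p(\cdot, H)$. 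Hence the conditional probability equals $p(F,H)$, and summing gives the claim.

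The only subtle point is the conditional-probability step, since one must verify that the choice of isomorphism $G[T] \to H$ does not affect the distribution of the image of $S$ in $H$. This follows from the fact that the automorphism group of $H$ acts on the $f$-subsets of $V(H)$ preserving the induced isomorphism type, so after conditioning on $G[T] \cong H$ the image of a uniformly random $f$-subset of $T$ is a uniformly random $f$-subset of $V(H)$. This is the main (and essentially the only) obstacle, and it is routine. Everything else is counting.
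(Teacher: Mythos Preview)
Your argument is correct. The two-stage sampling equivalence and the law of total probability give exactly the identity, and your verification of the conditional probability is fine (in fact slightly more elaborate than necessary: once $G[T]\cong H$ is fixed, any isomorphism $\phi\colon T\to V(H)$ is a bijection, so it pushes the uniform distribution on $\binom{T}{f}$ to the uniform distribution on $\binom{V(H)}{f}$, and $G[S]\cong H[\phi(S)]$ automatically; the automorphism group of $H$ need not be invoked).

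The paper itself offers no proof of this statement: it is recorded as a ``Fact'' and used as a black box. Your write-up is the standard double-counting/two-step-sampling justification one would expect, so there is nothing to compare against.
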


 Given an integer $h \geq 2$, a rational number $b$ and rational numbers $w_H \geq 0$, where $H$ runs over all $h$-vertex graphs, the property $\Pi_{h,w,b}$ is defined as the property of all graphs $G$ satisfying
$$
\sum_{H}{w_H \cdot p(H,G)} \leq b\;.
$$ 
Throughout this paper, a tuple $(h,w,b)$ will always consist of an integer $h \geq 2$, a rational number $b$, and a function $w \colon\{H:v(H)=h\}\to \mathbb{Q}_{\geq 0}$ from the set of all $h$-vertex graphs to the nonnegative rationals. The value assigned by $w$ to a graph $H$ is denoted by $w_H$.
	
Since property testing algorithms, and POTs in particular, work by inspecting the subgraph induced by a small sample of vertices,
it is natural to ask if the property of not containing an induced copy of a fixed graph $H$ is a testable property.
Such a result was obtained by Alon, Fischer, Krivelevich and Szegedy \cite{AFKS} who proved that in fact for every finite family
of graphs ${\cal F}$, the property of being induced ${\cal F}$-free (i.e. not containing an induced copy of $F$ for every $F \in {\cal F}$)
is testable. It is easy to see that the family of properties $\Pi_{h,w,b}$ forms a strict generalization of the family of properties of being induced ${\cal F}$-free, since the former can encode the latter. 
Indeed, if all graphs in $\mathcal{F}$ have the same size $h$ then simply set $b = 0$, $w_H = 1$ for each $H \in \mathcal{F}$, and $w_H = 0$ for each $h$-vertex graph $H$ which is not in $\mathcal{F}$. 
If graphs in $\mathcal{F}$ have varying sizes, then take advantage of Fact \ref{fact:density_averaging}.
%the fact that for every pair of graphs $F,G$ and $h \geq v(F)$, it holds that $p(F,G) = \sum_{H}{p(F,H) \cdot p(H,G)}$, where the sum is over all $h$-vertex graphs $H$.  

It is natural to ask whether $\Pi_{h,w,b}$ has a POT for every $(h,w,b)$. Such a conjecture has been indeed raised by Goldreich and Shinkar in \cite{GS}.

\begin{conjecture}[\protect{\cite[Open Problem 3.11]{GS}}]\label{que:testable}
	Every property $\Pi_{h,w,b}$ has a POT.
\end{conjecture}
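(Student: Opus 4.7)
The plan is to disprove Conjecture~\ref{que:testable} by exhibiting a specific tuple $(h,w,b)$ for which $\Pi_{h,w,b}$ is not testable, and hence has no POT. The abstract suggests encoding a quasirandomness-type condition: the satisfying graphs should form a ``rigid'' global structure even though they are constrained only by a single linear relation among $h$-vertex induced densities.

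As a first step, I would design the weights by linearizing a Chung--Graham--Wilson-type inequality. The inequality $t(C_4,G) \geq t(K_2,G)^4$ characterizes quasirandom graphs at equality. The left side is linear in induced densities of $4$-vertex graphs, and the right side, which equals $t(4K_2,G)$, can be written as a linear combination of induced densities of $8$-vertex graphs via Fact~\ref{fact:density_averaging}. Unifying the vertex count to $h=8$ and shifting both sides by a nonnegative linear combination of densities (using the identity $\sum_H p(H,G)=1$), one can express the inequality $t(C_4,G) \leq t(K_2,G)^4$ in the form $\sum_H w_H\, p(H,G) \leq b$ with nonnegative rational $w_H$. The resulting property $\Pi_{h,w,b}$ essentially consists of (asymptotically) quasirandom graphs.

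The second step is to establish non-testability by producing, for every constant sample size $k$, two large graphs $G^{(1)}_k$ and $G^{(2)}_k$ such that $G^{(1)}_k$ satisfies $\Pi_{h,w,b}$, $G^{(2)}_k$ is $\varepsilon$-far from $\Pi_{h,w,b}$, and the induced subgraph distributions on a uniformly random $k$-vertex sample in the two graphs are within total variation distance $o(1)$. Since a canonical tester of sample complexity $k$ makes its decision based only on this distribution, no such tester can reliably distinguish $G^{(1)}_k$ from $G^{(2)}_k$. Combined with the Goldreich--Trevisan reduction, this rules out any tester of constant query complexity, and in particular any POT.

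The chief obstacle is constructing $G^{(2)}_k$. Since the property is governed by $h$-vertex densities, matching $k$-vertex statistics for $k \geq h$ means that $\sum_H w_H\, p(H,G^{(2)}_k)$ is only slightly above $b$, yet we still need $G^{(2)}_k$ to be $\varepsilon$-far from every satisfying graph in edit distance. A natural approach is a blow-up construction based on a small non-quasirandom template, iterated or tiled so that its low-order moments mimic a quasirandom graph of density $1/2$ while its higher-order structure makes it uncorrectable by $o(n^2)$ edge changes. Calibrating the iteration depth to the sample budget $k$ yields a diagonal argument: for each fixed $k$, the construction lives on enough vertices to hide the non-quasirandomness below the tester's detection threshold while preserving the $\Omega(n^2)$ edit-distance gap. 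Making this local--global trade-off quantitative is the technical heart of the argument.
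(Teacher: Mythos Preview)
Your high-level plan---encode quasirandomness via a linear subgraph-density inequality, then fool a bounded-sample tester by pitting a satisfying graph against a blow-up that is far from the property---is exactly the paper's strategy. But two of your three steps diverge from the paper in ways that matter.

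\textbf{The encoding.} You propose $h=8$ and the inequality $t(C_4,G)\le t(4K_2,G)=t(K_2,G)^4$, which (asymptotically) singles out quasirandom graphs at \emph{every} density. The paper stays at $h=4$: it takes the polynomial $f(x)=2x^4-x+\tfrac38$, which is nonnegative on $[0,1]$ with its unique zero at $x=\tfrac12$, and notes that replacing $x^4$ by $\tinj(C_4,G)$ only increases the value (since $\tinj(C_4,G)\ge \tinj(K_2,G)^4-O(1/n)$). Thus the \emph{linear} condition $\phi(G):=2\tinj(C_4,G)-\tinj(K_2,G)+\tfrac38\le 0$ forces both $\tinj(K_2,G)\approx\tfrac12$ and $\tinj(C_4,G)\approx\tfrac{1}{16}$, pinning down density $\tfrac12$ specifically. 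Your $h=8$ route may be workable, but it is not the paper's, and it leaves the density unconstrained, which complicates the farness argument.

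\textbf{The NO instance.} Here is a genuine gap. You suggest blowing up a ``small non-quasirandom template'' and iterating so that low-order moments mimic a quasirandom graph. But blow-ups \emph{preserve} subgraph densities up to collision errors, so a blow-up of a non-quasirandom template still has non-quasirandom $k$-vertex statistics; no amount of iteration fixes this. The paper does the opposite: it takes a small graph $G$ on $m$ vertices that \emph{is} in $\Pi_{h,w,b}$ (hence quasirandom, with the correct $k$-vertex statistics) and blows it up to $n$ vertices. The blow-up $\Gamma$ inherits $G$'s good statistics, while the complete/empty bipartite blocks between blow-up classes make $\Gamma$ fail $\tfrac{1}{m}$-quasirandomness and hence sit $\Omega(1)$-far from $\Pi_{h,w,b}$. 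The template you blow up must be a YES instance, not a NO instance.

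\textbf{The YES instance.} You take for granted that graphs in $\Pi_{h,w,b}$ with prescribed $k$-vertex statistics exist. This is not free: the property sits exactly on the boundary $z(G)=b$, and a priori $\Pi_{h,w,b}$ could be empty (or nearly so) for large $n$. The paper handles this by showing (Lemma~\ref{lem:non_empty}) that $G(n,1/2)\in\Pi_{h,w,b}$ with probability at least $1/(2n^4)$, using that $\mathbb{E}[\phi(G)]=0$ and that $\phi(G)$ is quantized in integer multiples of $1/n^{\underline{4}}$. This polynomial lower bound, combined with Azuma for $p(\mathcal{F},G)$, is what produces both the small YES graph $G$ (to be blown up) and the large YES graph $\Gamma^*$ whose $\mathcal{F}$-statistics match. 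Without this step the indistinguishability claim has no anchor on the satisfying side.
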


Our main result, Theorem \ref{thm:counterexample}, disproves the above conjecture in a strong sense, by showing that there are properties $\Pi_{h,w,b}$ that are not
testable at all (let alone testable using a POT). 
In fact, we present a property $\Pi_{h,w,b}$ which cannot even be testable with query-complexity $n^{0.01}$, even if the approximation parameter is constant (say $0.1$).  
%Such a property $\Pi_{h,w,b}$ is described in the statement of Theorem \ref{thm:counterexample}. 
%It is natural to ask what lower bound our proof gives on the query-complexity of testing $\Pi_{h,w,b}$. Unfortunately, this lower bound is quite weak: it is of order $(\log \log n)^{\Omega(1)}$. Consequently, we decided not to state it explicitly in Theorem \ref{thm:counterexample}. However, if we require the tester for $\Pi_{h,w,b}$ to be size-oblivious, then we can give a much better lower bound of $n^{\Omega(1)}$ on the query complexity of testing $\Pi_{h,w,b}$. 
For a graph $H$, denote by $\bar{H}$ the complement of $H$. 

\begin{theorem}\label{thm:counterexample}
	Let $K_4$ denote the complete graph on $4$ vertices, $D_4$ the diamond graph (i.e. $K_4$ minus an edge),
	$P_3$ the graph on $4$ vertices containing a path on $3$ vertices and an isolated vertex, $C_4$ the $4$-cycle, $P_4$
	the path on $4$ vertices, and $K_{1,3}$ the star on $4$ vertices. 
	Set $b = 5/16$, and let $w_H$ be the following weight function assigning a non-negative weight to each graph on $4$ vertices. 
	\begin{center}
		\def\arraystretch{1.5}%  1 is the default, change whatever you
		\begin{tabular}{| r || c | c | c | c | c | c | c | c | c | c | c| }
			\hline
			$H:$ & $K_4$ & $\bar{K_4}$ & $D_4$ & $\bar{D_4}$ &
			$P_3$ & $\bar{P_3}$ & $C_4$ & $\bar{C_4}$ &
			$K_{1,3}$ & $\bar{K_{1,3}}$ & $P_4$\\
			\hline
			$w_H:$ & $1$ & $\frac12$ & $\frac{5}{12}$ & $\frac{5}{12}$ &
			$\frac13$ & $\frac16$ & $\frac12$ & $\frac13$ &
			$\frac14$ & $\frac14$ & $\frac14$\\
			\hline
		\end{tabular}
	\end{center}
	Define the property 
	\begin{equation}\label{eqP}
	\Pi_{h,w,b}=\left\{G:\sum_{H:|V(H)|=4} w_H \cdot p(H,G)\leq \frac{5}{16}\right\}
	\end{equation}
	Then every canonical $0.1$-tester for $\Pi_{h,w,b}$ has sample complexity $s(n) > n^{0.01}$. 
%	Furthermore, every {\em size-oblivious} tester for $\Pi$ must make at least $n^{\Omega(1)}$ queries when operating on $n$-vertex input graphs. 
\end{theorem}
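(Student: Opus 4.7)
The plan is to follow the hint in the abstract: interpret the inequality $f(G) := \sum_H w_H p(H,G) \le 5/16$ as an encoding of the property ``$G$ is (close to) quasi-random of edge density $\tfrac12$,'' and then derive the sample-complexity lower bound via Yao's minimax principle applied to canonical testers. I would first verify the arithmetic identity
\[
\EE[f(G(n,1/2))] \;=\; \sum_{H:\,|V(H)|=4} w_H \cdot \frac{4!}{|\aut(H)|}\cdot 2^{-6} \;=\; \frac{5}{16},
\]
placing the Erd\H{o}s--R\'enyi graph with edge density $\tfrac12$ exactly on the boundary of $\Pi_{h,w,b}$. The main algebraic step is to show that the specific rational weights $w_H$ in the table enforce $f(G) - 5/16 \ge 0$ for every graph $G$, with equality in the graphon limit if and only if $G$ is $\tfrac12$-quasi-random. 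This should reduce to a flag-algebra-style sum-of-squares certificate writing $f(G)-5/16$ as a non-negative combination of squared deviations of smaller-order subgraph densities from their quasi-random values, closely related to the Chung--Graham--Wilson characterization of quasi-randomness via $4$-vertex statistics.

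For the lower bound, set $k = n^{0.01}$ and construct two distributions $\mathcal{D}_\mathrm{yes}, \mathcal{D}_\mathrm{no}$ on $n$-vertex graphs such that (i) $G\sim\mathcal{D}_\mathrm{yes}$ lies in $\Pi_{h,w,b}$ with probability at least $2/3$; (ii) $G\sim\mathcal{D}_\mathrm{no}$ is $0.1$-far from $\Pi_{h,w,b}$ with probability at least $2/3$; and (iii) the distribution of the unlabeled induced subgraph on a uniformly random $k$-element subset of $V(G)$ has total-variation distance $o(1)$ between $\mathcal{D}_\mathrm{yes}$ and $\mathcal{D}_\mathrm{no}$. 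Yao's principle, combined with the fact that a canonical tester's output is a function only of this induced subgraph, then forces any canonical $0.1$-tester to use more than $k$ samples. For $\mathcal{D}_\mathrm{yes}$ one takes $G(n,1/2)$, possibly after mild conditioning so that $f(G)\le 5/16$ with probability $\ge 2/3$ (using that $f$ concentrates around $5/16$ with $O(1/\sqrt n)$ fluctuations). For $\mathcal{D}_\mathrm{no}$ one wants a ``pseudo-random'' construction whose $k$-vertex sample distribution matches $G(k,1/2)$ (so that $f(G)\approx 5/16$) yet whose realizations are $\Omega(1)$-far in edit distance from every $\tfrac12$-quasi-random graph.

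The main obstacle is constructing $\mathcal{D}_\mathrm{no}$. The apparent tension is that matching $k$-vertex sample statistics seemingly pins $f(G)$ near $5/16$, which one would naively expect to imply that $G$ is close to $\Pi_{h,w,b}$. The resolution exploits the fact that the Lipschitz bound $|\Delta f|\le O(1/n^2)$ per single-edge modification is only \emph{one-sided}: it yields $d(G,\Pi_{h,w,b})/n^2\gtrsim f(G)-5/16$, but not the converse; so a graph can be $0.1$-far from $\Pi_{h,w,b}$ in edit distance while its $f$-value is only $O(1/n^2)$ above $5/16$, provided the graph is globally structured in a way incompatible with $\tfrac12$-quasi-randomness. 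A natural candidate for $\mathcal{D}_\mathrm{no}$ is therefore a $k$-wise independent pseudo-random graph ensemble (generated from a small-seed PRG so that its $k$-vertex marginal is exactly $G(k,1/2)$) composed with a macroscopic structural deformation that preserves $4$-vertex subgraph statistics but creates a quasi-randomness defect at scales above $n^{0.01}$. Exhibiting such an ensemble and verifying (ii) is the principal combinatorial content of the proof.
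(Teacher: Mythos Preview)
Your high-level plan---show that $\Pi_{h,w,b}$ encodes $\tfrac12$-quasirandomness, then exhibit yes/no instances whose $s$-vertex samples are statistically close---matches the paper's. But two of your three steps are underdeveloped, and the third is where the real obstacle lies.

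\textbf{The algebraic step.} You propose a flag-algebra sum-of-squares certificate for $f(G)-5/16\ge 0$. This is heavier than needed, and in fact the inequality is false for finite $n$: a $\Theta(n^{-4})$ fraction of all $n$-vertex graphs lie strictly inside $\Pi_{h,w,b}$, so $f(G)<5/16$ does occur. The clean route is a direct computation showing
\[
2f(G)-\tfrac58 \;=\; 2\,\tinj(C_4,G)-\tinj(K_2,G)+\tfrac38 \;=:\;\phi(G).
\]
Combining the Kov\'ari--S\'os--Tur\'an bound $\tinj(C_4,G)\ge \tinj(K_2,G)^4-O(1/n)$ with the one-variable fact that $2x^4-x+\tfrac38$ is convex with unique minimum $0$ at $x=\tfrac12$, one gets that $\phi(G)\le 0$ forces $\tinj(K_2,G)=\tfrac12\pm O(n^{-1/2})$ and $\tinj(C_4,G)\le \tfrac1{16}+O(n^{-1/2})$, hence (via Chung--Graham--Wilson with explicit polynomial dependence) every $n$-vertex $G\in\Pi_{h,w,b}$ is $O(n^{-1/24})$-quasirandom. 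No flag algebras are needed.

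\textbf{The ``no'' distribution is the real gap.} Your proposal---a $k$-wise-independent PRG ensemble ``composed with a macroscopic structural deformation''---does not work as stated, and you acknowledge this is the principal content. The trouble is intrinsic: if an ensemble has $k$-vertex marginals equal to $G(k,1/2)$ for any $k\ge 4$, then typical graphs from it satisfy $\tinj(K_2)\approx\tfrac12$ and $\tinj(C_4)\approx\tfrac1{16}$, which by CGW already forces them to be $o(1)$-quasirandom and hence \emph{close} to $\Pi_{h,w,b}$, not far. Any ``deformation'' that destroys quasirandomness at constant scale must perturb $4$-vertex statistics by $\Omega(1)$, which would also perturb the $k$-vertex marginal and ruin indistinguishability. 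So the PRG-plus-deformation picture is self-defeating.

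The construction the paper uses is a \emph{blowup}. Set $m=\Theta(n^{1/24})$ and pick (via the probabilistic method, using that $\PP[G(m,1/2)\in\Pi_{h,w,b}]\ge m^{-O(1)}$ and Azuma for $p(\mathcal F,\cdot)$) an $m$-vertex graph $G\in\Pi_{h,w,b}$ whose $s$-vertex subgraph statistics match those of $G(m,1/2)$ to within $0.1$. Let $\Gamma$ be the $(n/m)$-blowup of $G$. A sample of size $s\le n^{0.01}\ll\sqrt m$ from $\Gamma$ almost surely hits distinct parts, so its distribution is essentially that of a sample from $G$; thus $p(\mathcal F,\Gamma)\approx p(\mathcal F,G)$. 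On the other hand, every $n$-vertex $\Gamma'\in\Pi_{h,w,b}$ is $(1/m)$-quasirandom by the previous paragraph, so between any two blowup parts $V_i,V_j$ (each of size $n/m$) it has $(\tfrac12\pm\tfrac1m)(n/m)^2$ edges, whereas $\Gamma$ has $0$ or $(n/m)^2$; summing over pairs gives edit distance $\ge 0.1n^2$. So $\Gamma$ is $0.1$-far from $\Pi_{h,w,b}$ yet indistinguishable from a genuine member $\Gamma^*\in\Pi_{h,w,b}$ (constructed by the same probabilistic argument on $n$ vertices) by any canonical tester sampling $\le n^{0.01}$ vertices. The paper then compares these two \emph{deterministic} graphs directly rather than invoking Yao on distributions, which also sidesteps your need to argue that $G(n,1/2)$ conditioned on $\Pi_{h,w,b}$ retains good marginals.

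The blowup is the missing idea: it is precisely a graph whose $4$-vertex statistics are within $O(1/m)$ of quasirandom (so $\phi(\Gamma)$ is tiny) yet which fails $\delta$-quasirandomness for all $\delta<1/m$, and the choice $m=\Theta(n^{1/24})$ calibrates this failure against the quantitative CGW bound so that $\Gamma$ is $\Omega(1)$-far from $\Pi_{h,w,b}$.
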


By combining Theorem \ref{thm:counterexample} with the aforementioned result of \cite{GT}, we see that every $0.1$-tester for $\Pi_{h,w,b}$ must make $\Omega(n^{0.01})$ edge-queries when operating on $n$-vertex input graphs.

The bound of $n^{0.01}$ appearing in Theorem \ref{thm:counterexample} is not sharp. We believe that it would be interesting to determine the optimal sample complexity needed to test the property $\Pi = \Pi_{h,w,b}$ defined in \eqref{eqP}. At the moment, we cannot even show that a sample of size $o(n)$ suffices for testing $\Pi$. 
More generally, is it true that {\em every} property of the form  $\Pi_{h,w,b}$ can be tested with sample complexity $o(n)$?
We leave this as an open problem. At the end of Section \ref{sec:counter}, we will explain how we came to choose the \nolinebreak coefficients \nolinebreak in \nolinebreak \eqref{eqP}. 

%The proof of Theorem \ref{thm:counterexample} appears in Section \ref{sec:counter}. 
%The main idea behind the proof is to show that there exists a property $\Pi_{h,w,b}$ (specifically, the one defined in \eqref{eqP}) which encodes the property of being  quasirandom with density $\frac{1}{2}$. More precisely, we show that if a graph $G$ satisfies the property given by \eqref{eqP}, then its edge density must be roughly $1/2$ and its $C_4$ density roughly $1/16$, which is known to imply that $G$ is quasirandom (see \cite{CGW}). 
The proof of Theorem \ref{thm:counterexample} appears in Section \ref{sec:counter}. 
The main idea behind the proof is to show that the property $\Pi_{h,w,b}$ defined in \eqref{eqP} encodes the property of being quasirandom with density $\frac{1}{2}$. More precisely, we show that if a graph $G$ satisfies the property given by \eqref{eqP}, then its edge density must be roughly $1/2$ and its $C_4$ density roughly $1/16$, which is known to imply that $G$ is quasirandom \nolinebreak (see \nolinebreak \cite{CGW}). 

Now, since graphs in $\Pi_{h,w,b}$ must be $o(1)$-quasirandom, a
large enough blowup of any graph $G\in \Pi_{h,w,b}$ is, say, $0.1$-far from
$\Pi_{h,w,b}$. This fact alone is already sufficient to show that
$\Pi_{h,w,b}$ has no size-oblivious 0.1-tester (with sample complexity independent of $n$), since such testers
cannot distinguish between a (large enough) graph $G$ and a blowup thereof.

Obtaining the $n^{\Omega(1)}$ lower bound of Theorem \ref{thm:counterexample} requires a more subtle argument, which goes as follows. 
%For general testers, however, the argument is more subtle, and goes as follows:
First, we show that graphs satisfying $\Pi_{h,w,b}$ are quite common; precisely, we show that a $\poly(1/n)$-fraction of all $n$-vertex graphs satisfy $\Pi_{h,w,b}$ (see Lemma \ref{lem:non_empty}). We then use this fact to show that for every large enough $n$ and for every prescribed family $\mathcal{F}$ of graphs on $s := n^{c}$ vertices (for $c > 0$ small), there exists an $n$-vertex graph which satisfies $\Pi_{h,w,b}$ and has the ``correct"\footnote{When saying that a graph $G$ has the ``correct" fraction of $s$-vertex induced subgraphs belonging to (a given graph-family) $\mathcal{F}$, we mean that the fraction of such subgraphs in $G$ is approximately the same as one would expect to have in the random graph $G(n,1/2)$.} number of $s$-vertex induced subgraphs belonging to $\mathcal{F}$ (see Lemma \ref{lem:good_graph_in_Pi(h,w,b)}). 

Then, to obtain the stated bound on the sample-complexity of testing $\Pi_{h,w,b}$, we argue as follows. Arguing by contradiction, we suppose that there is a canonical tester $\mathcal{T}$ for $\Pi_{h,w,b}$ having sample complexity $s(n)$, and let $\mathcal{F}(n)$ be the set of ``rejections graphs" of $\mathcal{T}$ (so each graph in $\mathcal{F}(n)$ has $s(n)$ vertices). Then, setting $m := n^{c'}$ (for a suitable small $c' > 0$), we fix an $m$-vertex graph $G$ which 
% satisfies $\Pi_{h,w,b}$ and 
has the ``correct" fraction of induced subgraphs belonging to $\mathcal{F}(n)$. We then let $\Gamma$ be the $n/m$-blowup of $G$, and let $V_1,\dots,V_{m}$ be the parts of this blowup, corresponding to the vertices of $G$. 
Now, every pair of parts $V_i,V_j$ forms either a complete or an empty bipartite graph in $\Gamma$, which means that $\Gamma$ cannot be $\frac{1}{m}$-quasirandom. It follows that in order to turn $\Gamma$ into a quasirandom graph, one must make many changes in all bipartite graphs $(V_i,V_j)$. Hence, $\Gamma$ is $\Omega(1)$-far from being quasirandom, and hence also $\Omega(1)$-far from $\Pi_{h,w,b}$. 
%On the other hand, any size-oblivious tester cannot distinguish between $G$ and $\Gamma$ by sampling $o(\sqrt{n})$ vertices, since a sample of this size is very likely to hit each of the parts $V_1,\dots,V_n$ at most once. Here we strongly rely on the tester being size-oblivious, since a tester which knows $n,N$ can trivially distinguish between $G$ and $\Gamma$. 

	Finally, we fix an $n$-vertex graph $\Gamma^*$ which satisfies $\Pi_{h,w,b}$ and also has the ``correct" fraction of induced subgraphs belonging to $\mathcal{F}(n)$.  
We then argue that as $G$ and $\Gamma^*$ have essentially the same fraction of induced subgraphs belonging to $\mathcal{F}(n)$, and as $\Gamma$ is a blowup of $G$, the tester $\mathcal{T}$ will very likely decide in the same manner on inputs $\Gamma$ and $\Gamma^*$. But as $\Gamma^*$ satisfies $\Pi_{h,w,b}$ and $\Gamma$ is far from it, $\mathcal{T}$ is not a valid tester for $\Pi_{h,w,b}$, giving the desired contradiction.

%To handle general (i.e., not necessarily size-oblivious) testers, we need to change our argument as follows: instead of comparing $G$ and $\Gamma$, we compare $\Gamma$ with an arbitrary $N$-vertex graph $\Gamma^*$ which satisfies $\Pi_{h,w,b}$. We use the counting lemma (see Lemma \ref{lem:counting}) to argue that as $G$ and $\Gamma^*$ are quasirandom and $\Gamma$ is a blowup of $G$, the small-subgraphs statistics of $\Gamma$ and $\Gamma^*$ are roughly the same, meaning that a tester which makes few samples cannot distinguish between them. 

%It is then not hard to show that the property defined by \eqref{eqP} cannot be tested using a constant number of queries.

To state our second main result, we first need to introduce the following important definition. 

\begin{definition}\label{def:removal}
	A tuple $(h,w,b)$ has the {\em removal property} if there is a function $f : (0,1] \rightarrow (0,1]$ such that for every $\varepsilon \in (0,1)$ and for every graph $G$, if $G$ is $\varepsilon$-far from $\Pi_{h,w,b}$ then
	$$
	\sum_{H}{w_H \cdot p(H,G)} \geq b + f(\varepsilon)\;.
	$$
\end{definition}
As an example, the main result of \cite{AFKS} mentioned above is equivalent to the statement that if $b=0$ then $\Pi_{h,w,b}$ has the removal property.

Goldreich and Shinkar \cite{GS} observed that if $(h,w,b)$ has the removal property then $\Pi_{h,w,b}$ admits a size-oblivious POT. Indeed, given an input graph $G$, the POT works by sampling a random induced subgraph of $G$ of order $h$, and then rejecting with probability $w_H$ if the sampled subgraph is isomorphic to $H$, for each $H$ on $h$ vertices. (Note that by multiplying by a suitable constant, we may assume that $w_H \in [0,1]$ for every $H$.) 
Observe that the probability that $G$ is rejected by the tester is precisely $\sum_{H}{w_H \cdot p(H,G)}$.
Hence, if $G$ satisfies $\Pi_{h,w,b}$ then by the definition of this property, $G$ is rejected with probability at most $b$. And if $G$ is $\varepsilon$-far from $\Pi_{h,w,b}$ then by the removal property, $G$ is rejected with probability at least $b + f(\varepsilon)$. Thus, Definition \ref{def:POT} is satisfied with $c = 1 - b$.
%As mentioned above, Goldreich and Shinkar \cite{GS} observed that $\Pi_{h,w,b}$ has a size-oblivious POT whenever $(h,w,b)$ has the removal property (see Definition \ref{def:removal}).
%%Our second result, Theorem \ref{thm:removal}, establishes the converse implication, showing that if $\Pi_{h,w,b}$ has a size-oblivious POT, then $(h,w,b)$ has the removal property.
%Our second result, Theorem \ref{thm:removal}, establishes the converse implication.

Our second result, Theorem \ref{thm:removal}, establishes the converse of the observation described in the previous paragraph, by showing that the removal property is {\em necessary} to having a size-oblivious POT. 
	
\begin{theorem}\label{thm:removal}
For every tuple $(h,w,b)$, if $\Pi_{h,w,b}$ has a size-oblivious POT then $(h,w,b)$ has the removal property.
\end{theorem}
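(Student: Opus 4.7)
The plan is to derive a contradiction via graphon compactness and the linearity of the POT's acceptance probability. By the canonicalization of Goldreich--Trevisan (which preserves size-obliviousness), we may assume the POT $\mathcal T$ is canonical with some constant sample size $s\ge h$ (pad the sample with unused vertices if necessary). Then its acceptance probability on input $G$ has the linear form $A(G)=\sum_{H:\,v(H)=s}\pi_H\,p(H,G)$ for some $\pi_H\in[0,1]$. Using Fact~\ref{fact:density_averaging}, I would rewrite $W(G):=\sum_{H:\,v(H)=h}w_H\,p(H,G)$ as a linear functional of $s$-vertex densities as well; hence both $A$ and $W$ are continuous in the graphon (subgraph-density) topology.

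Next, suppose toward contradiction that $(h,w,b)$ lacks the removal property. Then for some $\varepsilon_0>0$ there is a sequence $(G_n)$ with $|V(G_n)|\to\infty$, each $G_n$ being $\varepsilon_0$-far from $\Pi_{h,w,b}$ while $W(G_n)\to b$. The POT's soundness forces $A(G_n)\le c-f(\varepsilon_0)$ for every $n$. By graphon compactness, I would pass to a subsequence whose graphons converge to a graphon $U$; continuity of $W$ and $A$ then gives $W(U)=b$ and $A(U)\le c-f(\varepsilon_0)<c$.

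The contradiction will come from constructing a sequence of graphs $(H_n)$ with $H_n\in\Pi_{h,w,b}$ whose $s$-vertex density profile converges to that of $U$; indeed this would force $A(H_n)\to A(U)<c$, contradicting the completeness requirement $A(H_n)\ge c$. The strategy is to perturb $U$ slightly in the cut metric to a graphon $U^*$ satisfying $W(U^*)\le b-\eta$ for some $\eta>0$, while keeping $|A(U^*)-A(U)|<f(\varepsilon_0)/2$ (possible by continuity of $A$), and then take $H_n$ to be an $n$-vertex $U^*$-random graph. Standard concentration of subgraph densities gives $W(H_n)\to W(U^*)<b$ almost surely, so $H_n\in\Pi_{h,w,b}$ for all large $n$, while $A(H_n)\to A(U^*)<c-f(\varepsilon_0)/2$, the desired contradiction.

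The main obstacle is producing the perturbation $U^*$ with $W(U^*)$ strictly below $b$. Generically, any direction of strict decrease of $W$ at $U$ suffices. The delicate case is when $U$ is a local (or global) minimum of $W$ attaining value $b$, and I would handle it by exploiting that $\Pi_{h,w,b}$ is nonempty in arbitrarily large sizes (necessary because the POT's completeness parameter $c>0$ forces some inputs to be accepted): fix a graph $F\in\Pi_{h,w,b}$ with associated graphon $U_F$ and examine the one-parameter polynomial $t\mapsto W((1-t)U+tU_F)$, which equals $b$ at $t=0$ and is at most $b$ at $t=1$. Either this polynomial takes a value strictly less than $b$ for some $t\in(0,1]$, yielding $U^*$, or it is identically $b$ on the segment; in the latter case every graphon on the segment already sits on the boundary of the graphon property, and one can instead combine a small random perturbation with the sampling step (flipping $o(n^2)$ edges to push $W(H_n)$ just below $b$, which is negligible in both edit distance and the $s$-vertex density profile) to still deliver the required sequence $H_n\in\Pi_{h,w,b}$.
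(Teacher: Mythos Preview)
Your graphon-compactness strategy is a natural and genuinely different line of attack from the paper's, but the final step has a real gap. In the ``boundary'' case where the limit graphon $U$ satisfies $W(U)=b$ and no nearby graphon has $W<b$ (e.g.\ when $b$ equals the global minimum of $W$ over graphons, as in the property of Theorem~\ref{thm:counterexample}), you assert that after sampling $H_n$ from a boundary graphon one can ``flip $o(n^2)$ edges to push $W(H_n)$ just below $b$.'' This is exactly the statement that a graph with $W(H_n)=b+o(1)$ is $o(1)$-close in edit distance to $\Pi_{h,w,b}$ --- which is the contrapositive of the removal property you are trying to prove. Nothing in your argument rules out that $\Pi_{h,w,b}$ is a sparse, scattered set (indeed, in the paper's example it has density only $n^{-O(1)}$ among all $n$-vertex graphs), so an $o(n^2)$-edit ball around $H_n$, containing only $2^{o(n^2)}$ graphs, need not meet it. The segment argument to $U_F$ does not help either: if $W\equiv b$ along the segment then $W(F)=b$ on the nose, and blowups or samples of $F$ will again typically have $W$ slightly above $b$. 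So the hard case is not dispatched.

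For contrast, the paper's proof avoids graphons and perturbation entirely. It first scales so that $b$ and all $w_H$ are integers; then, via a double-sampling lemma (Lemma~\ref{lem:exponential_error_probability}), it shows that if $G$ is $\varepsilon$-far then a random $U\in\binom{V(G)}{s(\varepsilon)^4}$ has $G[U]\notin\Pi_{h,w,b}$ with probability $1-e^{-\Omega(s)}$. Integrality then forces $z(G[U])\ge b+\binom{|U|}{h}^{-1}$ whenever $G[U]\notin\Pi_{h,w,b}$, and averaging (using $z(G)=\mathbb{E}_U[z(G[U])]$, which is Fact~\ref{fact:density_averaging}) yields $z(G)\ge b+\tfrac12 s(\varepsilon)^{-4h}$. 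This gives an explicit $f(\varepsilon)$ and sidesteps the boundary issue completely: the ``granularity'' of $z$ on $|U|$-vertex graphs supplies the gap above $b$, rather than any perturbation in graphon space.
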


Theorem \ref{thm:removal} is proved in Section \ref{sec:removal}.
From this theorem it easily follows that if one representation of a property as $\Pi_{h,w,b}$ has the removal property, then all such representations have the removal property. We state this fact in the following corollary.
	
\begin{corollary}\label{cor:removal}
Let $(h,w,b)$ and $(h',w',b')$ be tuples such that $\Pi_{h,w,b} = \Pi_{h',w',b'}$. Then $(h,w,b)$ has the removal property if and only if $(h',w',b')$ has the removal property.
\end{corollary}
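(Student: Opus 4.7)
The plan is to apply Theorem \ref{thm:removal} and the Goldreich--Shinkar observation described immediately before it (which is the converse direction: the removal property implies the existence of a size-oblivious POT for $\Pi_{h,w,b}$). Together, these two statements give an equivalence: a tuple $(h,w,b)$ has the removal property if and only if $\Pi_{h,w,b}$ admits a size-oblivious POT. The key conceptual point to exploit is that the notion of a size-oblivious POT depends only on the \emph{property} $\Pi$ (as a set of graphs), not on the particular choice of tuple $(h,w,b)$ used to represent it, since a POT is just an algorithm operating on graphs whose acceptance guarantees are phrased in terms of satisfying $\Pi$ or being $\varepsilon$-far from $\Pi$.

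Concretely, assume $\Pi_{h,w,b} = \Pi_{h',w',b'}$ and suppose $(h,w,b)$ has the removal property. By the Goldreich--Shinkar observation, $\Pi_{h,w,b}$ admits a size-oblivious POT $\mathcal{T}$. Since $\Pi_{h,w,b} = \Pi_{h',w',b'}$ as properties, the very same algorithm $\mathcal{T}$ is also a size-oblivious POT for $\Pi_{h',w',b'}$: the sets of graphs on which its acceptance probability is at least $c$ and on which it is at most $c-f(\varepsilon)$ are defined via $\Pi$ and distance to $\Pi$, which are the same for both tuples. Applying Theorem \ref{thm:removal} to the tuple $(h',w',b')$, we conclude that $(h',w',b')$ has the removal property. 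The reverse implication is obtained by swapping the roles of the two tuples.

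There is essentially no obstacle here beyond noting that the removal property is an \emph{a priori} feature of the tuple (it involves the specific linear combination $\sum_H w_H \cdot p(H,G)$), whereas the existence of a size-oblivious POT is a feature of the property alone; Theorem \ref{thm:removal} and the Goldreich--Shinkar observation together bridge this gap, and the corollary falls out immediately once this equivalence is recorded.
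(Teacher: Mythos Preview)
Your proposal is correct and follows essentially the same approach as the paper: both combine the Goldreich--Shinkar observation (removal property $\Rightarrow$ size-oblivious POT) with Theorem~\ref{thm:removal} (size-oblivious POT $\Rightarrow$ removal property) to obtain an equivalence, and then note that the existence of a size-oblivious POT depends only on the property $\Pi$ and not on the particular tuple representing it. Your write-up is in fact slightly more explicit than the paper's in spelling out the two directions of the argument.
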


\paragraph{Paper overview:} 
%The rest of the paper is organized as follows.
%In Section \ref{sec:counter} we prove Theorem \ref{thm:counterexample}.
%Section \ref{sec:removal} contains the proof of Theorem \ref{thm:removal}.
%Finally, in Section \ref{sec:correction} we address the discrepancy between Theorem \ref{thm:removal} and one of the results of \cite{GS}.
The proof of Theorem \ref{thm:counterexample} appears in Section \ref{sec:counter}, and the proof of Theorem \ref{thm:removal} appears in Section \ref{sec:removal}. 

\section{Proof of Theorem \ref{thm:counterexample}}\label{sec:counter}
Let $\Pi_{h,w,b}$ be as in the statement of Theorem \ref{thm:counterexample}.
As a first step towards proving the theorem, we give a different description of $\Pi_{h,w,b}$ in terms of injective densities of edges and $4$-cycles, see Lemma \ref{lem:tinj} below. First we need to introduce some notation. 
For a graph $G$, denote $$z(G):=\sum_{H:|V(H)|=4}w_H \cdot p(H,G),$$
where the weights $w_H$ are defined in the statement of Theorem \ref{thm:counterexample}. Under this notation, \linebreak
$\Pi_{h,w,b}=\{G: z(G)\leq b\}$, where $b = 5/16$.
For a pair of graphs $H$ and $G$, define
\[
\tinj(H,G) = \frac1{n^{\underline{h}}}
\lvert\{\varphi\colon V(H) \rightarrow V(G) \text{ injective s.t. }
uv\in E(H) \Rightarrow \varphi(u)\varphi(v)\in E(G)\}\rvert,
\]
and
\[
\tind(H,G) = \frac1{n^{\underline{h}}}
\lvert\{\varphi\colon V(H) \rightarrow V(G) \text{ injective s.t. }
uv\in E(H) \Leftrightarrow \varphi(u)\varphi(v)\in E(G)\}\rvert,
\]
where $n^{\underline{h}} = n\cdot(n-1)\cdot\dots\cdot(n-h+1)$.
Note that $\tind(H,G) = p(H,G) \cdot \aut(H)/h!$, where $\aut(H)$ is the number of automorphisms of $H$. 
The following lemma gives a simpler description of $\Pi_{h,w,b}$.
\begin{lemma}\label{lem:tinj}
	$\Pi_{h,w,b} = \{G:\phi(G)\leq 0\}$, where
	$\phi(G)= 2\tinj(C_4,G) - \tinj(K_2,G) + \frac{3}{8}.$
\end{lemma}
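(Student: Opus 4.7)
My plan is to establish Lemma \ref{lem:tinj} by showing the stronger algebraic identity
$$
z(G) - \tfrac{5}{16} \;=\; \tfrac12\,\phi(G),
$$
from which $z(G)\le \tfrac{5}{16}\iff \phi(G)\le 0$ is immediate. The identity will be verified as an equality of linear forms in the eleven induced densities $p(H,G)$ (for $|V(H)|=4$), using the normalization $\sum_{H:|V(H)|=4}p(H,G)=1$.

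\textbf{Step 1: Rewrite $\tinj(K_2,G)$ and $\tinj(C_4,G)$ in terms of induced 4-vertex densities.} Starting from the standard averaging identity (sampling first the 4-subset, then the ordering)
$$
\tinj(F,G) \;=\; \sum_{H:|V(H)|=4} \frac{\#\{\text{injective homs } F\to H\}}{4^{\underline{|V(F)|}}}\cdot p(H,G),
$$
I would read off: for $F=K_2$, the numerator is $2e(H)$, giving $\tinj(K_2,G)=\tfrac16\sum_H e(H)\,p(H,G)$; for $F=C_4$, the numerator is $8\,c(H)$ where $c(H)$ is the number of Hamiltonian 4-cycles of $H$, which is $3$ for $K_4$, $1$ for $D_4$, $1$ for $C_4$, and $0$ for every other 4-vertex graph $H$. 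Therefore
$$
\tinj(C_4,G) \;=\; p(K_4,G) + \tfrac13\,p(D_4,G) + \tfrac13\,p(C_4,G).
$$

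\textbf{Step 2: Expand $\tfrac12\phi(G)$ using $\sum_H p(H,G)=1$.} I would convert the additive constant $\tfrac{3}{16}=\tfrac12\cdot\tfrac38$ into $\tfrac{3}{16}\sum_H p(H,G)$, yielding
$$
\tfrac12\phi(G) \;=\; \sum_{H:|V(H)|=4}\Bigl(c^{C_4}_{H} - \tfrac12 c^{K_2}_{H} + \tfrac{3}{16}\Bigr)\,p(H,G),
$$
where $c^{C_4}_H,c^{K_2}_H$ are the coefficients produced in Step 1. Similarly, $z(G)-\tfrac{5}{16}=\sum_H (w_H-\tfrac{5}{16})\,p(H,G)$.

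\textbf{Step 3: Coefficient check.} The task reduces to verifying the eleven scalar identities
$$
w_H \;=\; c^{C_4}_H - \tfrac12\,c^{K_2}_H + \tfrac12
\qquad\text{for each 4-vertex graph }H.
$$
For the three graphs containing a $C_4$ this needs both coefficients (e.g., for $K_4$: $1 - \tfrac12\cdot 1 + \tfrac12 = 1 = w_{K_4}$; for $D_4$: $\tfrac13 - \tfrac12\cdot\tfrac56 + \tfrac12 = \tfrac{5}{12} = w_{D_4}$; for $C_4$: $\tfrac13 - \tfrac12\cdot\tfrac23 + \tfrac12 = \tfrac12 = w_{C_4}$), and for the remaining eight graphs only $e(H)$ is needed. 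I expect each of the eleven checks to work out on the nose; this is the routine calculation that the coefficients in the table were chosen to make valid.

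\textbf{Where the work lies.} There is no conceptual obstacle here — the lemma is an algebraic identity that the authors presumably reverse-engineered in order to encode the target quasirandomness-forcing inequality $2\tinj(C_4,G)\le \tinj(K_2,G)-\tfrac38$ as a linear inequality in induced 4-vertex densities. The only mildly subtle point is remembering to fold in the constant $\tfrac38$ via the partition-of-unity relation $\sum_H p(H,G)=1$; without that, the constants on the two sides cannot be reconciled.
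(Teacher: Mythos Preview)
Your proposal is correct and follows essentially the same route as the paper: both establish the identity $\phi(G)=2z(G)-\tfrac{5}{8}$ by expanding $\tinj(C_4,G)$ as $p(K_4,G)+\tfrac13 p(D_4,G)+\tfrac13 p(C_4,G)$ and expressing $\tinj(K_2,G)$ in terms of the induced $4$-vertex densities. The only cosmetic difference is that the paper handles the edge term via $p(K_2,G)=1-p(\bar{K_2},G)$ and then invokes Fact~\ref{fact:density_averaging} to expand $p(\bar{K_2},G)=\sum_H p(\bar{K_2},H)\,p(H,G)$, whereas you expand $\tinj(K_2,G)=\tfrac{1}{6}\sum_H e(H)\,p(H,G)$ directly and absorb the additive constant using $\sum_H p(H,G)=1$; these are the same computation up to the substitution $p(\bar{K_2},H)=1-\tfrac{e(H)}{6}$.
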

\begin{proof}
	First, note that $\tinj(K_2,G) = p(K_2,G)$.
	Now, as $C_4,D_4,K_4$ are the only $4$-vertex graphs containing $C_4$ as a subgraph, and as $C_4$ has two (labeled) supergraphs isomorphic to $D_4$ and one supergraph isomorphic to $K_4$, we have 
	\begin{align*}
	\tinj(C_4,G) &= \tind(C_4,G) + 2\tind(D_4,G) + \tind(K_4,G) \\
	&= \frac{\aut(C_4)}{4!} \cdot p(C_4,G) +
	2\cdot \frac{\aut(D_4)}{4!} \cdot p(D_4,G) +
	\frac{\aut(K_4)}{4!} \cdot p(K_4,G)\\
	&= \frac13 p(C_4,G) + \frac13 p(D_4,G) + p(K_4,G).
	\end{align*}
	Plugging the above into the definition of $\phi(G)$, we get:
	\begin{align*}
	\phi(G)&=\frac23 p(C_4,G) + \frac23 p(D_4,G) + 2 p(K_4,G) -
	p(K_2,G) + \frac38 \\
	&=\frac23 p(C_4,G) + \frac23 p(D_4,G) + 2 p(K_4,G) +
	p(\bar{K_2},G)-\frac58\\
	&=\frac23 p(C_4,G) + \frac23 p(D_4,G) + 2 p(K_4,G) +
	\sum_{H:|V(H)|=4} p(\bar{K_2},H) p(H,G)-\frac58\\
	&=\sum_{H:|V(H)|=4}2w_H \cdot p(H,G) - \frac{5}{8}.
	\end{align*}
	Here, the penultimate inequality uses Fact \ref{fact:density_averaging}.
	So we see that $\phi(G)\leq 0$ holds if and only if
	$\sum_{H:|V(H)|=4}{w_H \cdot p(H,G)} \leq 5/16$, namely if and only if $G \in \Pi_{h,w,b}$, as required. 
\end{proof}

%In what follows, we will need the following well-known fact, which follows from the proof of the Kov\'{a}ri-S\'{o}s-Tur\'{a}n theorem \cite{KST}. For a proof of this fact, see e.g. \cite[Lemma 2.1]{subgraphs}.

In what follows, we will need the following well-known fact, which is closely related to the Kov\'{a}ri-S\'{o}s-Tur\'{a}n theorem \cite{KST}. For a proof of this fact, see e.g. \cite[Lemma 2.1 and Corollary 2.1]{subgraphs}.
\begin{fact}\label{C4_KST}
	Every $n$-vertex graph $G$ satisfies\footnote{Usually this inequality is stated in terms of the {\em homomorphism density}, as $\t(C_4,G) \geq \t(K_2,G)^4$. The error-term $O(\frac{1}{n})$ appearing in Fact \ref{C4_KST} accounts for the difference between the homomorphism density and the injective density, see \cite[Equation (5.21) in Section 5.2.3]{Lovasz}.}
	$\tinj(C_4,G) \geq \tinj(K_2,G)^4 - O\left(\frac{1}{n}\right)$. 
\end{fact}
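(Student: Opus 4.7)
The plan is to pass from injective densities to homomorphism densities, prove the cleaner inequality $t(C_4,G) \geq t(K_2,G)^4$ by two applications of Cauchy--Schwarz (Jensen), and then absorb the small conversion error into the $O(1/n)$ slack in the statement.

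First I would introduce the homomorphism density $t(H,G) := \frac{1}{n^{|V(H)|}}|\{\varphi: V(H) \to V(G) : uv \in E(H) \Rightarrow \varphi(u)\varphi(v) \in E(G)\}|$. Because the number of non-injective maps $[h] \to [n]$ is $n^h - n^{\underline{h}} = O(n^{h-1})$, one has $\tinj(H,G) \geq t(H,G) - O(1/n)$ for every fixed $h$-vertex $H$: indeed, since the set of injective homs is contained in the set of all homs and $n^{\underline{h}} \leq n^h$,
\[
\tinj(H,G) = \frac{\#\text{inj homs}}{n^{\underline{h}}} \geq \frac{\#\text{all homs}}{n^h} - \frac{n^h - n^{\underline{h}}}{n^h} = t(H,G) - O(1/n).
\]
In the opposite direction, $t(K_2,G) = (1-1/n)\tinj(K_2,G)$, so $|t(K_2,G) - \tinj(K_2,G)| \leq 1/n$.

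The heart of the proof is the bound $t(C_4,G) \geq t(K_2,G)^4$. Writing $A$ for the adjacency matrix of $G$ and letting $x,y,z,w$ be i.i.d.\ uniform vertices, symmetry between $x$ and $z$ gives
\[
t(C_4,G) = \mathbb{E}_{x,y,z,w}\bigl[A_{xy}A_{yz}A_{zw}A_{wx}\bigr] = \mathbb{E}_{y,w}\Bigl[\bigl(\mathbb{E}_x A_{xy}A_{xw}\bigr)^2\Bigr].
\]
Two successive applications of Jensen's inequality (first pulling the expectation over $(y,w)$ inside the square, then pulling $\mathbb{E}_y$ inside a second square) yield
\[
t(C_4,G) \geq \Bigl(\mathbb{E}_x\bigl(\mathbb{E}_y A_{xy}\bigr)^2\Bigr)^2 \geq \bigl(\mathbb{E}_{x,y} A_{xy}\bigr)^4 = t(K_2,G)^4.
\]

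Chaining everything together: $\tinj(C_4,G) \geq t(C_4,G) - O(1/n) \geq t(K_2,G)^4 - O(1/n) \geq \tinj(K_2,G)^4 - O(1/n)$, where the last step uses that $\tinj(K_2,G), t(K_2,G) \in [0,1]$ and differ by $O(1/n)$, so raising to the fourth power propagates this error linearly. There is no genuine obstacle here; the only point requiring care is keeping track of the direction of the conversion error between $t$ and $\tinj$ on both sides of the inequality, which is handled by the two observations in the first paragraph.
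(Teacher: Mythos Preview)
Your proof is correct and follows exactly the approach the paper indicates: the paper does not give its own proof of this fact but cites \cite{subgraphs} and, in the footnote, explicitly points out that the result follows from the homomorphism-density inequality $t(C_4,G)\ge t(K_2,G)^4$ together with the $O(1/n)$ conversion between $t$ and $\tinj$ from \cite[Section~5.2.3]{Lovasz}. Your two-step Jensen argument for $t(C_4,G)\ge t(K_2,G)^4$ and the careful bookkeeping of the $t\leftrightarrow\tinj$ error in both directions are precisely what is needed.
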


We now give some background on quasirandomness. For a thorough overview of the subject, we refer the reader to \cite{Lovasz}. In what follows, we write $x = y \pm z$ to mean that $x \in [y - z, y + z]$. An $n$-vertex graph $G$ is {\em $\delta$-quasirandom} (with density $\frac{1}{2}$) if for every pair of disjoint sets $U,V \subseteq V(G)$ such that $|U|,|V| \geq \delta n$, it holds that 
$e(U,V) = \left( \frac{1}{2} \pm \delta \right)|U||V|$. 

%For a family of graphs $\mathcal{F}$ and a graph $G$, define 
%$
%p(\mathcal{F},G) := \sum_{F \in \mathcal{F}}{p(F,G)}.
%$
%The well-known {\em counting lemma} states that a quasirandom graph has approximately the same distribution of small subgraphs as a random graph with the same density. Here we use the following variant (see e.g. \nolinebreak \cite{CGW}). 
%\begin{lemma}[Counting lemma]\label{lem:counting}
%	For every $s \geq 2$ and $\varepsilon > 0$ there is $\delta = \delta(s,\varepsilon) > 0$ such that for every family $\mathcal{F}$ of $s$-vertex graphs and for every $\delta$-quasirandom graph $G$, it holds that
%	$$
%	p(\mathcal{F},G) = \sum_{F \in \mathcal{F}}{2^{-\binom{s}2}\frac{s!}{\text{aut}(F)}} \pm \varepsilon.
%	$$
%\end{lemma}
%\noindent
%Note that for each $s$-vertex graph $F$, the quantity $2^{-\binom{s}2}\frac{s!}{\text{aut}(F)}$ is just the expected value of $p(F,G(n,\frac{1}{2}))$.

The following seminal result\footnote{We state this result with explicit dependencies between the parameters, as this is necessary for proving our explicit lower bound (on the sample-complexity) in Theorem \ref{thm:counterexample}. The explicit dependence we state in Theorem \ref{thm:CGW} appears only implicitly in \cite{CGW}. It is very likely that this dependence is not optimal (to the best of our knowledge, the optimal dependence is not known). Having better dependence would result in a better lower bound in Theorem \ref{thm:counterexample}.} of Chung, Graham and Wilson \cite{CGW} states that quasirandomness essentially boils down to having the ``right" densities of edges and $4$-cycles. 
\begin{theorem}[\cite{CGW}]\label{thm:CGW}
	For every $\delta \in (0,1)$ there are $\gamma = \gamma(\delta) = \Omega(\delta^{12})$ and $n_0 = n_0(\delta) = O(\delta^{-12})$ such that if a graph $G$ on at least $n_0$ vertices satisfies 
	\begin{equation}\label{eq:quasirandom}
	\tinj(K_2,G) = \frac{1}{2} \pm \gamma \; \text{ and } \;
	\tinj(C_4,G) \leq \frac{1}{16} + \gamma,
	\end{equation}
	then $G$ is $\delta$-quasirandom.
\end{theorem}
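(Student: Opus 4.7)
The plan is a spectral analysis of the adjacency matrix $A$ of $G$. First, I would translate the hypotheses into trace identities: $\tinj(K_2,G) = 1/2 \pm \gamma$ gives $\mathrm{tr}(A^2) = 2|E(G)| = n^2/2 \pm O(\gamma n^2)$, and $\tinj(C_4,G) \leq 1/16+\gamma$ gives $\mathrm{tr}(A^4) \leq n^4/16 + O(\gamma n^4)$ after absorbing the $O(n^3)$ gap between the $C_4$ homomorphism count and $n^{\underline{4}} \tinj(C_4,G)$ into the error (valid once $n \geq n_0 = \Omega(1/\gamma)$). Let $\lambda_1 \geq \lambda_2 \geq \cdots \geq \lambda_n$ be the eigenvalues of $A$. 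The Rayleigh quotient applied to $\mathbf{1}$ gives $\lambda_1 \geq 2|E|/n = n/2 - O(\gamma n)$, and $\lambda_1^4 \leq \mathrm{tr}(A^4)$ gives $\lambda_1 \leq n/2 + O(\gamma n)$, so $\lambda_1 = n/2 \pm O(\gamma n)$. Consequently $\sum_{i \geq 2}\lambda_i^4 = \mathrm{tr}(A^4) - \lambda_1^4 \leq O(\gamma n^4)$, and $\mu := \max_{i \geq 2}|\lambda_i| \leq O(\gamma^{1/4})\cdot n$.

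Next I would show that a unit top eigenvector $v_1$ is $O(\gamma^{1/2})$-close to $w := \mathbf{1}/\sqrt n$. Cauchy--Schwarz on degrees gives $\tinj(K_{1,2},G) \geq \tinj(K_2,G)^2 = 1/4 - O(\gamma)$; and the identity $\tinj(C_4,G) = \frac{1}{n^{\underline{4}}}\sum_{u,u'} c(u,u')^2 \pm O(1/n)$ (with $c(\cdot,\cdot)$ the codegree) together with Cauchy--Schwarz on $c$ gives $\tinj(K_{1,2},G)^2 \leq \tinj(C_4,G) + O(1/n) \leq 1/16 + O(\gamma)$, hence $\tinj(K_{1,2},G) \leq 1/4 + O(\gamma)$. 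These sandwich $\tinj(K_{1,2},G) = 1/4 \pm O(\gamma)$. Therefore $\|Aw\|^2 = \sum_v d(v)^2/n = n^2/4 \pm O(\gamma n^2)$ and $w^T A w = 2|E|/n = n/2 \pm O(\gamma n)$. Writing $w = \sum_i \alpha_i v_i$, the equality $\sum_i \lambda_i^2 \alpha_i^2 = \|Aw\|^2$ combined with $\lambda_1^2 = n^2/4 \pm O(\gamma n^2)$ and $|\lambda_i|^2 \leq \mu^2 = O(\gamma^{1/2}) n^2$ for $i \geq 2$ forces $(n^2/4 - \mu^2)(1 - \alpha_1^2) \leq O(\gamma n^2)$, hence $1 - \alpha_1^2 \leq O(\gamma)$. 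Choosing the sign of $v_1$ so that $\alpha_1 > 0$, we get $\|v_1 - w\|^2 = 2(1-\alpha_1) \leq O(\gamma)$.

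Finally, for any disjoint $U, V \subseteq V(G)$ with $|U|,|V| \geq \delta n$, decompose $A = \lambda_1 v_1 v_1^T + A'$ where $\|A'\|_{\mathrm{op}} \leq \mu$. Then
\[
e(U,V) = \mathbf{1}_U^T A \mathbf{1}_V = \lambda_1 (v_1 \cdot \mathbf{1}_U)(v_1 \cdot \mathbf{1}_V) + \mathbf{1}_U^T A' \mathbf{1}_V,
\]
with the second summand bounded in absolute value by $\mu\sqrt{|U||V|} \leq O(\gamma^{1/4}) n^2$. For the first summand, Cauchy--Schwarz applied to $\|v_1 - w\| \leq O(\gamma^{1/2})$ gives $v_1 \cdot \mathbf{1}_U = |U|/\sqrt n \pm O(\gamma^{1/2} \sqrt{|U|})$, and similarly for $V$; multiplying by $\lambda_1 = n/2 \pm O(\gamma n)$ yields $\lambda_1 (v_1 \cdot \mathbf{1}_U)(v_1 \cdot \mathbf{1}_V) = |U||V|/2 \pm O(\gamma^{1/2}) n^2$. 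Combining, $|e(U,V) - |U||V|/2| \leq C\gamma^{1/4} n^2$ for some absolute constant $C$. Since $|U||V| \geq \delta^2 n^2$, this is at most $\delta |U||V|$ whenever $C\gamma^{1/4} \leq \delta^3$, giving $\gamma = \Omega(\delta^{12})$ and $n_0 = O(\delta^{-12})$. The hard part will be tracking the error terms tightly enough to reach the claimed $\delta^{12}$ exponent; in particular, the second step, where the edge-density and $C_4$-density hypotheses must be combined to pin down the top eigenvector to the all-ones direction, is the delicate point.
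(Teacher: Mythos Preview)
The paper does not give its own proof of this statement; Theorem~\ref{thm:CGW} is quoted from \cite{CGW}, with a footnote remarking that the explicit $\delta^{12}$ dependence is only implicit there. So there is nothing in the paper to compare your argument against line by line.

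That said, your spectral argument is correct and is essentially the quantitative version of the classical Chung--Graham--Wilson route. The three steps --- (i) using $\trace(A^4)$ to pin down $\lambda_1 = n/2 \pm O(\gamma n)$ and to bound $\max_{i\ge 2}|\lambda_i|$ by $O(\gamma^{1/4})n$; (ii) sandwiching the cherry density via $t(K_2)^2 \le t(K_{1,2})$ and $t(K_{1,2})^2 \le t(C_4)$ so that $\|Aw\|^2 = n^2/4 \pm O(\gamma n^2)$, which forces $1-\alpha_1^2 = O(\gamma)$ and hence $\|v_1-w\| = O(\gamma^{1/2})$; (iii) the expander-mixing estimate --- are all sound, and your bookkeeping is right: the dominant error is the $O(\gamma^{1/4})n^2$ from the spectral gap term, and combining it with $|U||V|\ge \delta^2 n^2$ gives exactly $\gamma = \Omega(\delta^{12})$ and $n_0 = O(1/\gamma) = O(\delta^{-12})$.

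The point you flag as ``delicate'' --- aligning the top eigenvector with the all-ones direction --- is handled correctly: the key is that you go through $\|Aw\|^2 = \sum_i \lambda_i^2\alpha_i^2$ rather than $w^\top A w = \sum_i \lambda_i\alpha_i^2$, since the former yields $1-\alpha_1^2 = O(\gamma)$ while the latter would only give $O(\gamma^{1/4})$ and would spoil the exponent. Your derivation of $(\lambda_1^2-\mu^2)(1-\alpha_1^2)\le \lambda_1^2-\|Aw\|^2 = O(\gamma n^2)$ is exactly the right inequality here.
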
 
%We are now in a position to prove the following lemma, which shows that graphs in $\Pi_{h,w,b}$ must be $o(1)$-quasirandom. 
%An important ingredient in the proof of Theorem \ref{thm:counterexample} is the following lemma, which shows that graphs in $\Pi_{h,w,b}$ are quasirandom. 
%\begin{lemma}\label{lem:quasirandom}
%For every $\gamma \in (0,1)$ there is $n_0(\gamma)$ such that every graph $G \in \Pi_{h,w,b}$ on $n \geq n_0(\gamma)$ vertices satisfies
%$\tinj(K_2,G) = \frac{1}{2} \pm \gamma$ and $\tinj(C_4,G) = \frac{1}{16} \pm \gamma$.
%%\begin{equation*}\label{eq:quasirandom}
%%\tinj(K_2,G) = \frac{1}{2} \pm \gamma \text{ and }
%%\tinj(C_4,G) = \frac{1}{16} \pm \gamma.
%%\end{equation*}
%\end{lemma}
An important ingredient in the proof of Theorem \ref{thm:counterexample} is the following lemma, which shows that graphs that satisfy $\Pi_{h,w,b}$ must be quasirandom.
%\begin{lemma}\label{lem:quasirandom}
%	For every $\delta \in (0,1)$ there is $n_0(\delta)$ such that every graph $G \in \Pi_{h,w,b}$ on $n \geq n_0(\delta)$ vertices is $\delta$-quasirandom.
%\end{lemma}
\begin{lemma}\label{lem:quasirandom}
	For every $n \geq 1$, every $n$-vertex graph $G \in \Pi_{h,w,b}$ is $\delta$-quasirandom with $\delta = O(n^{-1/24})$. 
\end{lemma}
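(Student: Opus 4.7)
The plan is to reduce the condition $G \in \Pi_{h,w,b}$ to the quasirandomness-conditions of Theorem \ref{thm:CGW} by combining Lemma \ref{lem:tinj} with the Kov\'ari--S\'os--Tur\'an bound of Fact \ref{C4_KST}. Writing $t := \tinj(K_2,G)$, Lemma \ref{lem:tinj} gives
$$2\tinj(C_4,G) \le t - \tfrac{3}{8},$$
while Fact \ref{C4_KST} gives $\tinj(C_4,G) \ge t^4 - O(1/n)$. Chaining these two inequalities yields the single univariate estimate
$$g(t) := 2t^4 - t + \tfrac{3}{8} \le O(1/n).$$

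The key algebraic observation is that $g$ is a nonnegative polynomial with a double root at $t=\tfrac12$; explicitly, one checks
$$g(t) = 2\bigl(t - \tfrac12\bigr)^2 \bigl(t^2 + t + \tfrac{3}{4}\bigr),$$
and since $t^2+t+\tfrac{3}{4} = (t+\tfrac12)^2+\tfrac12 \ge \tfrac12$ for every real $t$, we obtain the quadratic lower bound $g(t) \ge (t-\tfrac12)^2$. Substituting into the displayed inequality gives $(t-\tfrac12)^2 \le O(1/n)$, hence $\tinj(K_2,G) = \tfrac12 \pm O(n^{-1/2})$. Plugging this back into $2\tinj(C_4,G) \le t - \tfrac38$ yields $\tinj(C_4,G) \le \tfrac{1}{16} + O(n^{-1/2})$.

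At this point, both conditions in \eqref{eq:quasirandom} of Theorem \ref{thm:CGW} hold with $\gamma = O(n^{-1/2})$. Since that theorem guarantees $\delta$-quasirandomness whenever $\gamma(\delta) = \Omega(\delta^{12})$ and $n \ge n_0(\delta) = O(\delta^{-12})$, choosing $\delta$ to satisfy $\delta^{12} \asymp n^{-1/2}$ gives $\delta = O(n^{-1/24})$, as required; for $n$ smaller than an absolute constant, the bound $\delta = O(n^{-1/24})$ gives $\delta \ge 1$, for which $\delta$-quasirandomness is vacuous.

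The only nontrivial step is spotting the factorization $g(t) = 2(t-\tfrac12)^2(t^2+t+\tfrac{3}{4})$, which makes the argument go through: the fact that $t = \tfrac12$ is a \emph{double} (as opposed to simple) root of $g$ is what converts the crude bound $g(t) \le O(1/n)$ into the strong estimate $t = \tfrac12 \pm O(n^{-1/2})$. This is the property that the weights $w_H$ in \eqref{eqP} were tuned to enforce, and it is what allows the proof to recover the quasirandom density $\tfrac12$ from a single linear inequality in subgraph densities.
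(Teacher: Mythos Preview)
Your proof is correct and follows essentially the same approach as the paper: both combine Lemma~\ref{lem:tinj} with Fact~\ref{C4_KST} to obtain $2t^4 - t + \tfrac{3}{8} \le O(1/n)$ for $t = \tinj(K_2,G)$, deduce $t = \tfrac{1}{2} \pm O(n^{-1/2})$ and $\tinj(C_4,G) \le \tfrac{1}{16} + O(n^{-1/2})$, and then invoke Theorem~\ref{thm:CGW}. The only difference is cosmetic: where the paper verifies $|t-\tfrac12| \le \gamma$ by convexity and evaluating $f$ at $\tfrac12 \pm \gamma$, you use the explicit factorization $g(t) = 2(t-\tfrac12)^2(t^2+t+\tfrac34) \ge (t-\tfrac12)^2$, which is arguably cleaner.
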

\begin{proof}
	Let $n \geq 1$ and let $G$ be an $n$-vertex graph which satisfies $\Pi_{h,w,b}$. Fix the smallest $\delta \in (0,1)$ with the property that $n_0(\delta) \leq n$ and $\gamma(\delta) \geq (C/n)^{1/2}$, where $n_0(\delta)$ and $\gamma(\delta)$ are from Theorem \ref{thm:CGW}, and $C$ is some absolute constant, to be chosen later. The parameter-dependencies in Theorem \ref{thm:CGW} imply that $\delta = O(n^{-1/24})$. To prove the lemma, our goal is to show that $G$ is $\delta$-quasirandom. 
	In light of Theorem \ref{thm:CGW}, it is enough to show that $G$ satisfies \eqref{eq:quasirandom} with $\gamma = (C/n)^{1/2}$. 
%\begin{equation}\label{eq:quasirandom}
%\tinj(K_2,G) = \frac{1}{2} \pm \gamma \text{ and }
%\tinj(C_4,G) = \frac{1}{16} \pm \gamma.
%\end{equation}

By Fact \ref{C4_KST}, we have
$\tinj(C_4,G) \geq \tinj(K_2,G)^4 - C/n,$ where $C$ is a suitable absolute constant.
%\begin{equation}\label{eq:Sidorenko}
%\tinj(C_4,G) \geq \tinj(K_2,G)^4 - O\left(\frac{1}{n}\right),
%\end{equation}
%where the last inequality holds if $n$ is large enough (as a function of $\gamma$).
This implies that
\begin{equation}\label{eq:density_polynomial_inequality}
\begin{split}
2\tinj(K_2,G)^4 - \tinj(K_2,G) + \frac{3}{8} &\leq
2\tinj(C_4,G) + \frac{C}{n} - \tinj(K_2,G) + \frac{3}{8} \\ &= \phi(G) + \frac{C}{n} \leq \frac{C}{n} = \gamma^2,
\end{split} 
\end{equation}
where the second inequality follows from Lemma \ref{lem:tinj}. 

Let $f(x) := 2x^4 - x + \frac{3}{8}$ for $x \in [0,1]$. With this notation, we can rephrase \eqref{eq:density_polynomial_inequality} as $f(\tinj(K_2,G)) \leq \nolinebreak \gamma^2$.  
Note that the function
$f$ is convex, and attains its minimum at $x = 1/2$. Therefore, if we had $\tinj(K_2,G) > \frac12 + \gamma$, then we would have
\[f(\tinj(K_2,G)) = 2\tinj(K_2,G)^4 - \tinj(K_2,G) + \frac{3}{8} >
2\left( \frac{1}{2}+\gamma \right)^4 - \left( \frac{1}{2}+\gamma \right) + \frac{3}{8}
= 2\gamma^4 + 4\gamma^3 + 3\gamma^2 > \gamma^2.\]
Similarly, if we had $\tinj(K_2,G) < \frac12 - \gamma$, then we would have
\[f(\tinj(K_2,G)) = 2\tinj(K_2,G)^4 - \tinj(K_2,G) + \frac{3}{8} >
2\left( \frac{1}{2}-\gamma \right)^4 - \left( \frac{1}{2}-\gamma \right) + \frac{3}{8}
= 2\gamma^4 - 4\gamma^3 + 3\gamma^2 > \gamma^2.\]

In any case, we see that $|\tinj(K_2,G) - \frac{1}{2}| > \gamma$ would stand in contradiction to \eqref{eq:density_polynomial_inequality}.
Hence,
$\tinj(K_2,G)=\frac12 \pm \gamma$. By using again the fact that $\phi(G) \leq 0$ (see Lemma \ref{lem:tinj}), we get that 
$$\tinj(C_4,G) \leq \frac{\tinj(K_2,G)}{2} - \frac3{16} \leq \frac{1}{4} + \frac{\gamma}{2} - \frac{3}{16} < \frac{1}{16} + \gamma.$$ 
We have thus shown that \eqref{eq:quasirandom} holds (for our particular choice of $\gamma$), as required. 
%This completes the proof of the lemma. 
%
%
%By using 
%%$\tinj(C_4,G) \geq \tinj(K_2,G)^4 - O\left( \frac{1}{n} \right)$ 
%the intermediate inequality in \eqref{eq:Sidorenko} 
%and $\tinj(K_2,G) \geq \frac{1}{2} - \gamma$,
%we get
%$$\tinj(C_4,G) \geq \left( \frac{1}{2}-\gamma \right)^4 - O\left( \frac{1}{n} \right)  
%% = \frac{1}{16} - \frac{1}{2}\gamma + \frac{3}{2}\gamma^2 - 2\gamma^3 + \gamma^4 - O\left( \frac{1}{n} \right) 
%\geq
%\frac{1}{16} - \gamma,$$ 
%where the last inequality can be easily verified, assuming that $n$ is large enough, by expanding the binomial expression.
%We have thus established \eqref{eq:quasirandom}.
%
%A well-known result of Chung, Graham and Wilson \cite{CGW} states that for every $\delta \in (0,1)$ there is $\gamma = \gamma(\delta)$ such that if a graph $G$ satisfies \eqref{eq:quasirandom}, then for every $U,V \subseteq V(G)$ with $|U|,|V| \geq \delta n$, it holds that $e(U,V) = \left( \frac{1}{2} \pm \delta \right)|U||V|$. The lemma follows by combining this result with the above.
\end{proof}

Next, we argue that for every $n$, a sizable portion of all $n$-vertex graphs satisfy $\Pi_{h,w,b}$. More precisely, we show that the probability that the random graph $G(n,1/2)$ satisfies $\Pi_{h,w,b}$ vanishes only polynomially (with $n$). 
\begin{lemma}\label{lem:non_empty}
	Let $n \geq 4$ and let $G \sim G(n,1/2)$. Then $G \in \Pi_{h,w,b}$ with probability at least $\frac{1}{2n^4}$.
\end{lemma}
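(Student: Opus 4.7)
The plan is to exploit three properties of $\phi(G) := 2\tinj(C_4,G) - \tinj(K_2,G) + \frac{3}{8}$ (the function from Lemma~\ref{lem:tinj}): that its expectation under $G(n,1/2)$ is \emph{exactly} $0$, that all of its values lie in the discrete lattice $\tfrac{1}{n^{\underline{4}}}\mathbb{Z}$, and that it is bounded below by a small constant. A simple discrete-pigeonhole argument then forces $\Pr[\phi(G) \le 0]$ to be at least $1/(2n^{4})$.

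First I would establish $\mathbb{E}_{G \sim G(n,1/2)}[\phi(G)] = 0$ by a direct first-moment calculation. By linearity, $\mathbb{E}[\tinj(K_2,G)] = \tfrac{1}{2}$ and $\mathbb{E}[\tinj(C_4,G)] = 2^{-4} = \tfrac{1}{16}$, since each injective map contributes exactly the probability that its required edges are all present. Substituting yields $2\cdot\tfrac{1}{16} - \tfrac{1}{2} + \tfrac{3}{8} = 0$. This is the key algebraic miracle that makes the constant $\tfrac{3}{8}$ (equivalently $b=5/16$) the ``right'' threshold.

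Next comes the discreteness claim: for every graph $G$ on $n \ge 4$ vertices, $n^{\underline{4}} \cdot \phi(G)$ is an integer. Indeed, $2\tinj(C_4,G)\cdot n^{\underline{4}}$ is twice the number of injective edge-preserving maps from $C_4$ into $G$, the quantity $\tinj(K_2,G)\cdot n^{\underline{4}} = 2e(G)(n-2)(n-3)$ is an integer, and $(3/8)\cdot n^{\underline{4}}$ is an integer because four consecutive integers contain two evens, one divisible by $4$, so $8 \mid n^{\underline{4}}$. Consequently, whenever $\phi(G)>0$ one necessarily has $\phi(G) \ge 1/n^{\underline{4}}$. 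A trivial lower bound is also at hand: since $\tinj(C_4,G)\ge 0$ and $\tinj(K_2,G)\le 1$, we have $\phi(G) \ge -5/8$ for every graph.

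With those ingredients the conclusion is a one-line dichotomy. Setting $q := \Pr[\phi(G) \le 0]$ and splitting the expectation,
\[
0 \;=\; \mathbb{E}[\phi(G)] \;=\; \mathbb{E}[\phi(G)\,\mathbf{1}_{\phi>0}] + \mathbb{E}[\phi(G)\,\mathbf{1}_{\phi \le 0}] \;\ge\; \frac{1-q}{n^{\underline{4}}} - \frac{5q}{8},
\]
which rearranges to $q \ge 1/(\tfrac{5}{8}n^{\underline{4}} + 1) \ge 1/(2n^{4})$ for $n \ge 4$, where the last inequality uses $n^{\underline{4}} \le n^{4}$ together with the elementary check $\tfrac{5}{8}n^{4} + 1 \le 2n^{4}$. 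The only mildly delicate step is the divisibility observation, which is exactly where the hypothesis $n \ge 4$ enters; everything else is a routine first-moment computation plus a tiny arithmetic rearrangement.
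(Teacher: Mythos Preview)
Your proof is correct and follows essentially the same route as the paper: compute $\mathbb{E}[\phi(G)]=0$, use the $8\mid n^{\underline{4}}$ divisibility to get $\phi(G)\ge 1/n^{\underline{4}}$ whenever $\phi(G)>0$, and split the expectation. The only cosmetic differences are that the paper uses the cruder bound $\phi(G)\ge -1$ (rather than your $-5/8$) and passes immediately from $1/n^{\underline{4}}$ to $1/n^4$ before the final inequality.
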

\begin{proof}
	By Lemma \ref{lem:tinj}, $G \in \Pi_{h,w,b}$ if and only if $\phi(G) \leq 0$. 
	It is easy to see that $\mathbb{E}[\tinj(K_2,G)] = \frac{1}{2}$ and $\mathbb{E}[\tinj(C_4,G)] = \frac{1}{16}$. Hence, 
	$$
	\mathbb{E}[\phi(G)] = 2\mathbb{E}[\tinj(C_4,G)] - \mathbb{E}[\tinj(K_2,G)] + \frac{3}{8} = 0.
	$$
	Now, observe that for every $n$-vertex graph $G$, the densities 
	$\tinj(C_4,G)$ and $\tinj(K_2,G)$ are integer multiples of $1/n^{\underline{4}} = \frac{1}{n(n-1)(n-2)(n-3)}$. It follows that 
	$\phi(G) = 2\tinj(C_4,G) - \tinj(K_2,G) + \frac{3}{8}$ is also an integer multiple of $1/n^{\underline{4}} \geq 1/n^4$ (here we use the fact that $n^{\underline{4}}$ is divisible by $8$). Hence, if $\phi(G) > 0$ (i.e., if $G \notin \Pi_{h,w,b}$) then in fact $\phi(G) \geq 1/n^4$. 
	On the other hand, it is evident that $\phi(G) \geq -1$ for every graph $G$. By combining these two facts, we get that
	\begin{align*}
	0 &= \mathbb{E}[\phi(G)] = \mathbb{E}[\phi(G) \; | \; G \in \Pi_{h,w,b}] \cdot \mathbb{P}[G \in \Pi_{h,w,b}] + 
	\mathbb{E}[\phi(G) \; | \; G \notin \Pi_{h,w,b}] \cdot \mathbb{P}[G \notin \Pi_{h,w,b}] 
	\\ &\geq 
	-1 \cdot \mathbb{P}[G \in \Pi_{h,w,b}] + \frac{1}{n^4} \cdot \mathbb{P}[G \notin \Pi_{h,w,b}] = 
	\frac{1}{n^4} - \left( 1 + \frac{1}{n^4} \right) \cdot \mathbb{P}[G \in \Pi_{h,w,b}]. 
	\end{align*}
	It follows that $\mathbb{P}[G \in \Pi_{h,w,b}] \geq \frac{1}{n^4 + 1} \geq \frac{1}{2n^4}$, as required. 
\end{proof}
Since a polynomially-large portion of all $n$-vertex graphs satisfy $\Pi_{h,w,b}$ (by Lemma \ref{lem:non_empty}), {\em most} $n$-vertex graphs in $\Pi_{h,w,b}$ satisfy any given property which is satisfied by $G(n,1/2)$ with probability that is superpolynomially (say, exponentially) close to $1$. In particular, we can always find an $n$-vertex graph in $\Pi_{h,w,b}$ that satisfies any given property which is very likely to be satisfied by $G(n,1/2)$.  
An example of such a statement is Lemma \ref{lem:good_graph_in_Pi(h,w,b)} below. To prove this lemma, we will need the following simple version of Azuma's inequality (see, e.g.,~\cite[Theorem 2.27]{JLR}). 

\begin{lemma}\label{lem:Azuma}{\cite[Theorem 2.27]{JLR}}
	Let $X$ be a non-negative random variable, not identically $0$, which is determined by $N$ independent trials $w_1, \ldots, w_N$. Suppose that $K \in \mathbb{R}$ is such that changing the outcome of any one of the trials can change the value of $X$ by at most $K$. Then, for every $\lambda \geq 0$, 
	$$
	\mathbb{P}\left[ |X - \mathbb{E}[X]| > \lambda \right] \leq 2e^{- \frac{\lambda^2}{2 K^2 N}}.
	$$
%	and that
%	$$
%	\mathbb{P}\left[ X \geq \mathbb{E}[X] + \lambda \right] \leq e^{- \frac{\lambda^2}{2 c^2 T}} .
%	$$ 
\end{lemma}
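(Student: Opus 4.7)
The plan is to derive this as the standard martingale proof of the bounded-differences (McDiarmid) inequality. First I would define the Doob martingale of $X$: set $X_0 := \mathbb{E}[X]$ and $X_i := \mathbb{E}[X \mid w_1, \ldots, w_i]$ for $i = 1, \ldots, N$, so that $(X_i)_{i=0}^N$ is a martingale with respect to the filtration generated by the trials, with $X_N = X$ almost surely. Writing $D_i := X_i - X_{i-1}$ for the martingale differences, we have $X - \mathbb{E}[X] = \sum_{i=1}^N D_i$, and the task reduces to an exponential tail bound for this martingale sum.

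Next I would verify that $|D_i| \leq K$ almost surely, which is the crucial place where the bounded-difference hypothesis and independence enter. Conditionally on $w_1, \ldots, w_{i-1}$, the quantity $X_i$ equals $g(w_i)$ with $g(w) := \mathbb{E}[X \mid w_1, \ldots, w_{i-1}, w_i = w]$. By independence of the trials, this conditional expectation integrates the identically-distributed tail $(w_{i+1}, \ldots, w_N)$ no matter which value is fixed for $w_i$; hence for any two outcomes $w, w'$ the difference $g(w) - g(w')$ equals the expectation over the tail of $X(w_1,\ldots,w_{i-1},w,w_{i+1},\ldots,w_N) - X(w_1,\ldots,w_{i-1},w',w_{i+1},\ldots,w_N)$, which by hypothesis is bounded by $K$ in absolute value pointwise. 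Hence $|g(w) - g(w')| \leq K$, and since $X_{i-1}$ is the $w_i$-average of $X_i = g(w_i)$, both $X_i$ and $X_{i-1}$ lie in a common interval of length at most $K$, giving $|D_i| \leq K$.

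With this bound in hand the rest is routine: Hoeffding's lemma applied conditionally yields $\mathbb{E}[e^{t D_i} \mid w_1, \ldots, w_{i-1}] \leq e^{t^2 K^2 / 2}$ for every $t \in \mathbb{R}$ (using conditional mean zero together with $|D_i| \leq K$), and iterating via the tower property gives $\mathbb{E}[e^{t(X - \mathbb{E}[X])}] \leq e^{N t^2 K^2 / 2}$. Markov's inequality with the optimal choice $t = \lambda/(N K^2)$ then gives $\mathbb{P}[X - \mathbb{E}[X] > \lambda] \leq e^{-\lambda^2/(2 N K^2)}$, and applying the same argument to $-X$ and taking a union bound supplies the factor of $2$. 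I expect the only subtle step to be the bound $|D_i| \leq K$, where independence is essential — it is what allows the tail $(w_{i+1}, \ldots, w_N)$ to be coupled identically across the two realizations of $w_i$, and without independence the martingale differences could exceed $K$. The hypotheses that $X$ is non-negative and not identically zero play no role in the proof itself; they are included only so that the conclusion is nontrivial.
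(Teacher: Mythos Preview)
Your proof is correct and is precisely the standard derivation of the bounded-differences (McDiarmid) inequality via the Doob martingale and Azuma--Hoeffding. The paper, however, does not give any proof of this lemma at all: it is simply quoted from \cite[Theorem~2.27]{JLR} and used as a black box in the proof of Lemma~\ref{lem:good_graph_in_Pi(h,w,b)}. So there is nothing to compare against; you have supplied the textbook argument that the paper chose to cite rather than reproduce. Your closing remark is also apt: the non-negativity and ``not identically $0$'' hypotheses are not used in the proof and are vestigial from the particular formulation in \cite{JLR}.
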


\noindent
For a family of (unlabeled) graphs $\mathcal{F}$ and a graph $G$, define 
$
p(\mathcal{F},G) := \sum_{F \in \mathcal{F}}{p(F,G)}\;.
$
\begin{lemma}\label{lem:good_graph_in_Pi(h,w,b)} 
	There exists $n_0$ such that the following holds for every $n \geq n_0$. Let 
	$s \leq n^{0.49}$ and let $\mathcal{F}$ be a family of (unlabeled) $s$-vertex graphs. Then there is an $n$-vertex graph $G \in \Pi_{h,w,b}$ such that
	\begin{equation}\label{eq:subgraph_count_concentration}
	p(\mathcal{F},G) = \sum_{F \in \mathcal{F}}{2^{-\binom{s}2}\frac{s!}{\aut(F)}} \pm 0.1.
	\end{equation}
\end{lemma}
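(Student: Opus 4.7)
The plan is to combine the polynomial lower bound from Lemma~\ref{lem:non_empty} with an Azuma-type concentration bound for $p(\mathcal{F}, G(n,1/2))$, then conclude via a union bound that a typical graph in $\Pi_{h,w,b}$ also satisfies \eqref{eq:subgraph_count_concentration}.

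First, I would identify the main term on the right-hand side of \eqref{eq:subgraph_count_concentration} as $\mathbb{E}[p(\mathcal{F}, G(n,1/2))]$: for each unlabeled $s$-vertex graph $F$, the probability that a fixed $s$-subset of $V(G(n,1/2))$ induces $F$ equals $\frac{s!}{\aut(F)} \cdot 2^{-\binom{s}{2}}$ (the number of labeled graphs on a given vertex set that are isomorphic to $F$, times the probability of obtaining any one specific labeled graph). Summing over $F \in \mathcal{F}$ gives the claimed mean. Hence it suffices to produce $G \in \Pi_{h,w,b}$ with $|p(\mathcal{F}, G) - \mathbb{E}[p(\mathcal{F}, G(n,1/2))]| \leq 0.1$.

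Next, I would apply Lemma~\ref{lem:Azuma} to $X := p(\mathcal{F}, G(n,1/2))$, viewed as a function of the $N = \binom{n}{2}$ independent edge-indicators. The crucial observation is that $X = \binom{n}{s}^{-1} \sum_{S} \mathbf{1}[G[S] \in \mathcal{F}]$, where each $s$-subset $S$ contributes to at most one class in $\mathcal{F}$. Flipping the status of a single edge $e$ affects only the $\binom{n-2}{s-2}$ subsets that contain both endpoints of $e$, and shifts the contribution of each such subset by at most $1/\binom{n}{s}$. Thus the Lipschitz constant is
\[
K \leq \frac{\binom{n-2}{s-2}}{\binom{n}{s}} = \frac{s(s-1)}{n(n-1)} \leq \frac{s^2}{n^2}.
\]
With $\lambda = 0.1$ and $s \leq n^{0.49}$, Lemma~\ref{lem:Azuma} yields
\[
\mathbb{P}\bigl[|X - \mathbb{E}[X]| > 0.1\bigr] \;\leq\; 2\exp\!\left(-\frac{(0.1)^2}{2 K^2 N}\right) \;\leq\; 2\exp\bigl(-\Omega(n^2/s^4)\bigr) \;\leq\; 2\exp\bigl(-\Omega(n^{0.04})\bigr),
\]
which is much smaller than $1/(2n^4)$ for $n$ large enough. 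Combining this with Lemma~\ref{lem:non_empty} via a union bound gives $\mathbb{P}[\,G \in \Pi_{h,w,b}\text{ and satisfies }\eqref{eq:subgraph_count_concentration}\,] > 0$, so the desired $G$ exists.

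The only step that requires care is the Lipschitz-constant computation: one must resist the temptation to bound the change of each term $p(F,G)$ separately, since that would introduce a factor of $|\mathcal{F}|$ (which may be as large as $2^{\binom{s}{2}}$) and destroy the concentration. The correct estimate exploits the fact that each $s$-subset is counted in at most one summand $p(F,\cdot)$ with $F \in \mathcal{F}$, so the contributions of the $\binom{n-2}{s-2}$ affected subsets add rather than multiply. Once this is in place, the regime $s \leq n^{0.49}$ is chosen precisely so that $s^4 \leq n^{2 - \Omega(1)}$, ensuring that the Azuma tail beats the $1/\poly(n)$ lower bound from Lemma~\ref{lem:non_empty}.
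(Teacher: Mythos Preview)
Your proposal is correct and follows essentially the same argument as the paper: compute $\mathbb{E}[p(\mathcal{F},G(n,1/2))]$, bound the edge-Lipschitz constant by $\binom{n-2}{s-2}/\binom{n}{s}\le (s/n)^2$, apply Lemma~\ref{lem:Azuma} with $\lambda=0.1$ to get a tail of $2\exp(-\Omega(n^2/s^4))$, and combine with the $\frac{1}{2n^4}$ lower bound from Lemma~\ref{lem:non_empty}. Your explicit remark that one must treat $p(\mathcal{F},\cdot)$ as a single bounded-increment function (rather than summing Lipschitz bounds over $F\in\mathcal{F}$) is a nice clarification of a point the paper leaves implicit.
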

\begin{proof}
	Consider $G \sim G(n,1/2)$. In light of Lemma \ref{lem:non_empty}, it is enough to show that \eqref{eq:subgraph_count_concentration} holds with probability larger than $1 - \frac{1}{2n^4}$. So consider the random variable $p(\mathcal{F},G)$. It is easy to see that 
	$$
	\mathbb{E}[p(\mathcal{F},G)] = \sum_{F \in \mathcal{F}}{2^{-\binom{s}2}\frac{s!}{\aut(F)}} \; . 
	$$
	Evidently, $p(\mathcal{F},G)$ is determined by the outcome of $\binom{n}{2}$ independent trials (one per pair of vertices of $G$). Furthermore, changing the relation of a single pair of vertices (namely, changing the outcome of a single trial) can change $p(\mathcal{F},G)$ by at most $\binom{n-2}{s-2}/\binom{n}{s} = \frac{s(s-1)}{n(n-1)} \leq (s/n)^2$. Thus, we may apply Lemma \ref{lem:Azuma} to the random variable $X := p(\mathcal{F},G)$ with $N := \binom{n}{2}$ and $K := (s/n)^2$. For \nolinebreak $\lambda = 0.1$, \nolinebreak we \nolinebreak obtain 
	$$
	\mathbb{P}\left[ |X - \mathbb{E}[X]| > 0.1 \right] \leq 
	2\exp\left( {\frac{0.01}{2(s/n)^4\binom{n}{2}}} \right) \leq 
	2\exp\left( {-\frac{n^2}{100s^4}} \right) < \frac{1}{2n^4},
	$$
	where the last inequality holds for large enough $n$, as $s \leq n^{0.49}$. So we see that with probability larger than $1 - \frac{1}{2n^4}$, it holds that 
	$p(\mathcal{F},G) = \mathbb{E}[p(\mathcal{F},G)] \pm 0.1$, as required.  
\end{proof}

\noindent
We are now ready to prove Theorem \ref{thm:counterexample}. 
\begin{proof}[Proof of Theorem \ref{thm:counterexample}]
%	We will show that $\Pi_{h,w,b}$ has no $0.1$-tester with query complextiy 
%	$q(n) = n^{0.01}$. 
%	Suppose then, for the sake of contradiction, that such a tester does exist. 
%	Then by~\cite[Theorem 2]{GT} (see also
%	\cite{GT_errata}), $\Pi_{h,w,b}$ has a {\em canonical} tester $\mathcal{T}$ whose sample complexity is $O(n^{0.01})$. (While this is not stated explicitly in \cite{GT}, the canonical tester produced in the proof of \cite[Theorem 2]{GT} has sample complexity which is at most a constant-factor larger than the query complexity of the tester which is the input to this theorem.) 
	Suppose, for the sake of contradiction, that $\Pi_{h,w,b}$ has a canonical $0.1$-tester $\mathcal{T}$ whose sample complexity $s(n)$ is at most $n^{0.01}$. 
	In what follows, we will assume that $n$ is large enough wherever needed. 	
%	Let $s(n)$ be the sample complexity of $\mathcal{T}$ (when operating on $n$-vertex input graphs). Note that $s(n) = O(n^{0.01})$. 
	As $\mathcal{T}$ is canonical, for every $n \geq 1$ there is a family $\mathcal{F}=\mathcal{F}(n)$ of (rejection) graphs of order $s(n)$ such that when invoked on input graphs with $n$ vertices, $\mathcal{T}$ rejects if and only if the subgraph induced by its sample belongs to $\mathcal{F}(n)$. The fact that $\mathcal{T}$ is a valid $0.1$-tester implies that the following holds for every $n$-vertex graph $G$. 
	\begin{enumerate}
		\item $p(\mathcal{F},G) \leq \frac13$ if $G\in \Pi_{h,w,b}$;
		\item $p(\mathcal{F},G)\geq \frac23$ if $G$ is $0.1$-far from
		$\Pi_{h,w,b}$.
	\end{enumerate}
	
	By Lemma \ref{lem:quasirandom}, there is an absolute constant $c > 0$ such that for every $n$, every $n$-vertex graph $G \in \Pi_{h,w,b}$ is $\delta$-quasirandom with $\delta = cn^{-1/24}$. Choose $n$ to be large enough so that $c^{-1} n^{1/24} \geq n_0$, where $n_0$ is from Lemma \ref{lem:good_graph_in_Pi(h,w,b)}. Set $m := c^{-1}n^{1/24}$ and $\mathcal{F} := \mathcal{F}(n)$, noting that every graph in $F \in \mathcal{F}$ satisfies $|V(F)| = s(n) \leq n^{0.01} \leq m^{0.49}$. By Lemma \ref{lem:good_graph_in_Pi(h,w,b)} (applied with $m$ in place of $n$ and with $s = s(n)$),  
	there is an $m$-vertex graph $G \in \Pi_{h,w,b}$ which satisfies  \eqref{eq:subgraph_count_concentration}. For convenience, we assume that $V(G) = [m]$.
	Let $\Gamma$ be the $\frac{n}{m}$-blow-up of $G$. That is, $\Gamma$ is obtained from $G$ by replacing each vertex $i \in [m] = V(G)$ with a vertex-set $V_i$ of size $n/m$, and replacing edges (resp. non-edges) of $G$ with complete (resp. empty) bipartite graphs. Note that $|V(\Gamma)| = n$. 
	
	We claim that $\Gamma$ is $0.1$-far from $\Pi_{h,w,b}$. Indeed, fix any
	$\Gamma'\in\Pi_{h,w,b}$ with $n$ vertices. 
	By Lemma \ref{lem:quasirandom}, $\Gamma'$ is $\delta$-quasirandom for $\delta = cn^{-1/24} = 1/m$. 
 	As $|V_1| = \dots = |V_{m}| = n/m$, this $\frac{1}{m}$-quasirandomness implies that
	$e_{\Gamma'}(V_i,V_j) = (\frac12\pm \frac{1}{m}) \cdot (n/m)^2$ for every pair $1 \leq i < j \leq m$.
	But since $e_{\Gamma}(V_i,V_j)\in\{0,(n/m)^2\}$, we must change at least $(\frac12 - \frac{1}{m})(n/m)^2 \geq 0.4(n/m)^2$ edges between $V_i$ and $V_j$ for every $1 \leq i < j \leq m$, in order to turn $\Gamma$ into $\Gamma'$ (here we assume that $m \geq 10$, which holds for large enough $n$). Therefore, the distance between $\Gamma$ and $\Gamma'$ is at least
	$\binom{m}{2} \cdot 0.4 (n/m)^2 \geq 0.1n^2$. 
	This shows that $\Gamma$ is indeed $0.1$-far from $\Pi_{h,w,b}$, as required.
	
%	Next, we claim that $|p(\mathcal{F},G) - p(\mathcal{F},\Gamma)| \leq 0.1$, where, as before, $\mathcal{F} = \mathcal{F}(n)$. To this end, let $S\in\binom{V(\Gamma)}{s}$ be chosen uniformly at random,
%	and let $\mathcal{B}$ be the event that there exists $1 \leq i \leq m$ for which $|S\cap V_i| > 1$. Note that 
%	$\PP(\mathcal{B}) \leq \binom{s}{2}/m = o(1) \leq 0.1$, as $s = O(n^{0.01})$ and $m = \Theta(n^{1/24})$. 
%	Observe that conditioned on $\mathcal{B}^c$, the probability that $\Gamma[S]$ is isomorphic to a given $s$-vertex graph $F$ is exactly $p(F,G)$.
%	Hence, 
%	\begin{equation}\label{eq:blowupbound}
%	\begin{split} 
%	p(\cF,\Gamma) &= \mathbb{P}[\Gamma[S] \in \mathcal{F}] = 
%	\mathbb{P}[\mathcal{B}] \cdot \PP\big[ \Gamma[S]\in\cF \mid \mathcal{B} \big] + 
%	\mathbb{P}[\mathcal{B}^c] \cdot \PP\big[ \Gamma[S]\in\cF \mid \mathcal{B}^c \big] \\ &= 
%	\mathbb{P}[\mathcal{B}] \cdot \PP\big[ \Gamma[S]\in\cF \mid \mathcal{B} \big] + 
%	\mathbb{P}[\mathcal{B}^c] \cdot p(\mathcal{F},G).
%	\end{split} 
%	\end{equation}
%	As $\PP(\mathcal{B}) \leq 0.1$, we get $p(\mathcal{F},\Gamma) \leq 0.1 + 0.9 p(\mathcal{F},G) \leq p(\mathcal{F},G) + 0.1$ and $p(\mathcal{F},\Gamma) \geq $

	Next, we claim that $p(\mathcal{F},\Gamma) \leq p(\mathcal{F},G) + 0.1$, where, as before, $\mathcal{F} = \mathcal{F}(n)$. To this end, set $s = s(n)$, let $S\in\binom{V(\Gamma)}{s}$ be chosen uniformly at random,
and let $\mathcal{B}$ be the event that there exists $1 \leq i \leq m$ for which $|S\cap V_i| > 1$. Note that 
$\PP(\mathcal{B}) \leq \binom{s}{2}/m = o(1) \leq 0.1$, as $s \leq n^{0.01}$ and $m = \Theta(n^{1/24})$. 
Observe that conditioned on $\mathcal{B}^c$, the probability that $\Gamma[S]$ is isomorphic to a given $s$-vertex graph $F$ is exactly $p(F,G)$.
Hence, 
\begin{equation}\label{eq:blowupbound}
\begin{split} 
p(\cF,\Gamma) = \mathbb{P}[\Gamma[S] \in \mathcal{F}] \leq
\mathbb{P}[\mathcal{B}] + 
\PP\big[ \Gamma[S]\in\cF \mid \mathcal{B}^c \big] = 
\mathbb{P}[\mathcal{B}] + p(\mathcal{F},G) \leq
0.1 + p(\mathcal{F},G),
\end{split} 
\end{equation}
as required. 

Now apply Lemma \ref{lem:good_graph_in_Pi(h,w,b)} again to obtain an $n$-vertex graph 
$\Gamma^*\in\Pi_{h,w,b}$ such that $p(\mathcal{F},\Gamma^*) = \rho \pm 0.1$, where we set
$$
\rho := \sum_{F \in \mathcal{F}}{2^{-\binom{s}2}\frac{s!}{\aut(F)}}.
$$ 
Our choice of $G$ via Lemma \ref{lem:good_graph_in_Pi(h,w,b)} implies that $p(\mathcal{F},G) = \rho \pm 0.1$ as well. We conclude that 
$p(\mathcal{F},\Gamma) \leq p(\mathcal{F},G) + 0.1 \leq \rho + 0.2 \leq p(\mathcal{F},\Gamma^*) + 0.3$. On the other hand, $p(\mathcal{F},\Gamma^*) \leq 1/3$ (as $\Gamma^*$ satisfies $\Pi_{h,w,b}$, see Item (a) above) and $p(\mathcal{F},\Gamma) \geq 2/3$ (as $\Gamma$ is $0.1$-far from $\Pi_{h,w,b}$, see Item (b) above). We have thus arrived at a contradiction, completing the proof. 
\end{proof}

A careful examination of the proof of Lemma \ref{lem:quasirandom} can reveal how we came up with the function $\phi(G)= 2\tinj(C_4,G) - \tinj(K_2,G) + \frac{3}{8}$, from which we then obtained the choice of weight function $w$ and independent coefficient $b$ appearing in the statement of Theorem \ref{thm:counterexample}. 
Evidently, our plan for proving Theorem \ref{thm:counterexample} was to find a linear inequality involving subgraph densities, which encodes the property of being quasirandom (with density $\frac{1}{2}$). 
Since quasirandomness depends only on the densities of edges and $4$-cycles (see Theorem \ref{thm:CGW}), it is natural to look for an inequality involving only these two parameters.  
Since every quasirandom graph satisfies $\tinj(C_4,G) \approx \tinj(K_2,G)^4$, it makes sense to try the following heuristic: start with a polynomial of the form $p(x) = x^4 + ax + b$, plug in $x = \tinj(K_2,G)$ and replace $x^4 = \tinj(K_2,G)^4$ with $\tinj(C_4,G)$, hoping that the resulting linear inequality $\tinj(C_4,G) + a \cdot \tinj(K_2,G) + b \leq 0$ will have the required properties. For this to work, it is necessary that the polynomial $p$ has a global minimum at $x = \frac{1}{2}$ and that $p$ equals $0$ at this point (so as to force graphs satisfying the inequality to have density $\frac{1}{2}$). Solving the constraints $p(\frac{1}{2}) = p'(\frac{1}{2}) = 0$ for $a$ and $b$, one obtains $a = -\frac{1}{2}$ and $b = \frac{3}{16}$. Multiplying the resulting $p$ by $2$, one recovers the aforementioned function $\phi(G)$.    

\section{Proof of Theorem \ref{thm:removal}}\label{sec:removal}
In this section we prove Theorem \ref{thm:removal} and Corollary \ref{cor:removal}. We will need the following auxiliary lemma.
%\begin{lemma}\label{lem:exponential_error_probability}
%	Let $\Pi$ be a graph property, and suppose that $\Pi$ has a canonical size-oblivious $\varepsilon$-tester $\mathcal{T}$ with sample complexity $s = s(\varepsilon)$ (and success probability $\frac{2}{3}$). Then for every $n \geq s^4$ and for every $n$-vertex graph $G$ which is $\varepsilon$-far from $\Pi$, the following holds. For $U$ chosen uniformly at random from $\binom{V(G)}{s^4}$, we have
%	$\mathbb{P}[ G[U] \in \Pi ] \leq e^{-\Omega(s)}$.
%\end{lemma}
\begin{lemma}\label{lem:exponential_error_probability}
	Suppose that a graph property $\Pi$ has a canonical size-oblivious $\varepsilon$-tester $\mathcal{T}$ with sample complexity $s = s(\varepsilon)$. 
%	(and success probability $\frac{2}{3}$). 
	Then for every $n \geq s^4$ and for every $n$-vertex graph $G$ which is $\varepsilon$-far from $\Pi$, the following holds. For $U$ chosen uniformly at random from $\binom{V(G)}{s^4}$, we have
	$\mathbb{P}[ G[U] \in \Pi ] \leq e^{-\Omega(s)}$.
\end{lemma}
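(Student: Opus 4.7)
The approach will be to exploit the canonical structure of the tester. Let $s := s(\varepsilon)$ and let $\mathcal{F}$ denote the family of $s$-vertex rejection graphs of $\mathcal{T}$, so that on any input graph $H$ the tester rejects with probability exactly $p(\mathcal{F},H) := \sum_{F \in \mathcal{F}} p(F,H)$. Since $\mathcal{T}$ is a valid size-oblivious $\varepsilon$-tester (and $|U| = s^4 \geq s$), its validity gives two facts: first, $p(\mathcal{F},G) \geq 2/3$ because $G$ is $\varepsilon$-far from $\Pi$; second, $p(\mathcal{F},G[U]) \leq 1/3$ whenever $G[U] \in \Pi$, since in that case $\mathcal{T}$ would have to accept $G[U]$ with probability at least $2/3$. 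Hence it suffices to show $\mathbb{P}[p(\mathcal{F},G[U]) \leq 1/3] \leq e^{-\Omega(s)}$.

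By Fact \ref{fact:density_averaging} applied with $h = s^4$ to each $F \in \mathcal{F}$ (averaging over the random $s^4$-subset $U$), we have $\mathbb{E}_U[p(\mathcal{F},G[U])] = p(\mathcal{F},G) \geq 2/3$. So the task reduces to a concentration statement for $p(\mathcal{F},G[U])$ around its mean. The crucial observation is that this quantity, viewed as a function of the random set $U$, has small bounded differences: swapping a single element of $U$ for a vertex outside $U$ changes $p(\mathcal{F},G[U])$ by at most $2/s^3$. This is because each fixed vertex of $U$ lies in exactly a $\binom{s^4-1}{s-1}/\binom{s^4}{s} = 1/s^3$ fraction of the $s$-subsets of $U$, and each such subset contributes at most one to the density $p(\mathcal{F},\cdot)$.

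I would then apply an Azuma-style inequality to the Doob martingale $M_i := \mathbb{E}[p(\mathcal{F},G[U]) \mid v_1,\dots,v_i]$, where $U = \{v_1,\dots,v_{s^4}\}$ is revealed by sampling one vertex at a time without replacement from $V(G)$. Each increment satisfies $|M_i - M_{i-1}| \leq 2/s^3$; with $N = s^4$ steps and deviation $\lambda = 1/3$, one obtains
\[\mathbb{P}\bigl[\,|p(\mathcal{F},G[U]) - p(\mathcal{F},G)| > 1/3\,\bigr] \;\leq\; 2\exp\!\bigl(-\Omega(s^2)\bigr),\]
which comfortably beats the required $e^{-\Omega(s)}$.

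The main obstacle is that Lemma \ref{lem:Azuma}, as stated in the excerpt, applies to functions of $N$ \emph{independent} trials, while here the coordinates $v_1,\dots,v_{s^4}$ are sampled without replacement and hence dependent. Overcoming this is a routine but nontrivial adaptation: one justifies the $2/s^3$ bound on the martingale increments via a coupling argument (swap $v_i$ for a fresh vertex and couple the remaining draws to keep $U$ as a uniform $s^4$-subset), or equivalently invokes a McDiarmid-type inequality for functions of uniformly random subsets. Apart from this concentration step, the proof is a direct consequence of the canonical-tester structure and Fact \ref{fact:density_averaging}.
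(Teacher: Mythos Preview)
Your proof is correct and takes a genuinely different route from the paper's argument. Both hinge on the same two observations (that $p(\mathcal{F},G)\geq 2/3$ and that $p(\mathcal{F},G[U])\leq 1/3$ whenever $G[U]\in\Pi$), but they diverge in how they force $p(\mathcal{F},G[U])$ to stay close to $p(\mathcal{F},G)$. You treat $p(\mathcal{F},G[U])$ as a bounded-difference statistic of the random set $U$ and appeal to a McDiarmid/Azuma inequality for sampling without replacement; this yields the stronger bound $e^{-\Omega(s^2)}$ and is conceptually very direct, at the cost of invoking a concentration inequality slightly outside the independent-trials framework of Lemma~\ref{lem:Azuma}. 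The paper instead uses a \emph{double-sampling} argument: it repeats the tester $s$ times on disjoint $s$-sets inside a larger sample $W$ of size $s^2$ (getting $e^{-\Omega(s)}$ via Chernoff), and then observes that if $U$ is a uniform $s^4$-set and $W$ is a uniform $s^2$-subsequence of $U$, the marginal of $W$ is uniform in $G$; comparing the behaviour of $W$ as a sample from $G$ (where it should be ``bad'') versus from $G[U]$ (where it should be ``good'' if $G[U]\in\Pi$) gives the bound. The paper's approach avoids any martingale machinery beyond a plain Chernoff bound, while yours is shorter and quantitatively sharper; both are valid, and the dependence issue you flag is indeed the only nontrivial technical point in your version.
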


\begin{proof}
	We use a double-sampling trick which is implicit in \cite{GGR}. 
Let $\mathcal{A}$ be the family of all $s$-vertex graphs $A$ such that $\mathcal{T}$ accepts if the subgraph induced by its sample is isomorphic to $A$.
%it sees a subgraph isomorphic to $A$. 
For a graph $G$, we say that a sequence of subsets $S_1,\dots,S_s \in \binom{V(G)}{s}$ is {\em good} if $G[S_i] \in \mathcal{A}$ for at least half of the values of $1 \leq i \leq s$; otherwise $S_1,\dots,S_s$ is {\em bad}. For a sequence of vertices $W = (x_1,\dots,x_{s^2})$, we say that $W$ is good (resp. bad) if $\{x_1,\dots,x_s\},\{x_{s+1},\dots,x_{2s}\},\dots,
\{x_{s^2-s+1},\dots,x_{s^2}\}$ is good (resp. bad).
Note that for a random $S \in \binom{V(G)}{s}$, if $G \in \Pi$ then $\mathbb{P}[G[S] \in \mathcal{A}] \geq \frac{2}{3}$, and if $G$ is $\varepsilon$-far from $\Pi$ then $\mathbb{P}[G[S] \in \mathcal{A}] \leq \frac{1}{3}$.
Using a standard Chernoff-type bound, one can show that the following holds for $S_1,\dots,S_s \in \binom{V(G)}{s}$ chosen uniformly at random and independently.
\begin{enumerate}
	\item If $G$ satisfies $\Pi$ then $S_1,\dots,S_s$ is good with probability at least $1 - e^{-Cs}$.
	\item If $G$ is $\varepsilon$-far from $\Pi$ then $S_1,\dots,S_s$ is bad with probability at least $1 - e^{-Cs}$.
\end{enumerate}
In both items above, $C > 0$ is an absolute constant.

The probability that there exists a pair $1 \leq i < j \leq s$ for which $S_i \cap S_j \neq \emptyset$ is at most $\binom{s}{2}\frac{s^2}{n} < \frac{1}{2}$, where the inequality follows from the assumption that $n \geq s^4$. It follows that with probability larger than $\frac{1}{2}$, the sets $S_1,\dots,S_s$ are pairwise-disjoint. Conditioned on the event that $S_1,\dots,S_s$ are pairwise-disjoint, the set $S_1 \cup \dots \cup S_s$ has the distribution of an element of $\binom{V(G)}{s^2}$ chosen uniformly at random. 
% Similarly, the $s^2$-element sequence $S_1,\dots,S_s$ (whose first $s$ elements are the elements of $S_1$, next $s$ elements are the elements of $S_2$, and so on), is distributed as a uniform sequence of {\em distinct} $s^2$ vertices of $G$.
Thus, a random sequence of vertices $W = (x_1,\dots,x_{s^2})$ chosen {\em without repetition} from a given graph $G$ satisfies the following.
\begin{enumerate}
	\item If $G$ satisfies $\Pi$ then $W$ is good with probability at least $1 - 2e^{-Cs}$.
	\item If $G$ is $\varepsilon$-far from $\Pi$ then $W$ is bad with probability at least $1 - 2e^{-Cs}$.
\end{enumerate} 

Now let $G$ be a graph on $n \geq s^4$ vertices which is $\varepsilon$-far from $\Pi$. Consider a random pair $(U,W)$, where $U$ is chosen uniformly at random from $\binom{V(G)}{s^4}$, and $W = (x_1,\dots,x_{s^2})$ is a sequence of vertices sampled randomly without repetition from $U$. Then the marginal distribution of $W$ is that of a uniform sequence of $s^2$ vertices of $G$, sampled without repetition. Thus, by viewing $W$ as a sample from $G$ (and recalling that $G$ is $\varepsilon$-far from $\Pi$), we see that
$\mathbb{P}[W \text{ is good}] \leq 2e^{-Cs}$ (by Item 2 above). On the other hand, 
%	by summing over all $U$ such that $G[U] \in \Pi$, and for each such $U$ viewing $W$ as a sample from $G[U]$, we see that 
if $G[U] \in \Pi$, then, by viewing $W$ as a sample from $G[U]$, we see that 
$\mathbb{P}[W \text{ is good} \; | \; U] \geq 1 - 2e^{-Cs}$ (by Item 1 above). 
By combining these two facts, we conclude that
$$\mathbb{P}[G[U] \in \Pi] \leq 
\frac{\mathbb{P}[W\text{ is good}]}{\mathbb{P}[W\text{ is good} \; | \; G[U] \in \Pi]} 
\leq \frac{2e^{-Cs}}{1 - 2e^{-Cs}} \leq 4e^{-Cs} = e^{-\Omega(s)}.$$
\end{proof}

%\begin{lemma}\label{lem:surplus}
%	Let $h \geq 2$ be an integer and let $b > 0$ be a rational number. For each graph $H$ on $h$ vertices, let $w_H \in \mathbb{Q}$. Let $\mathcal{P}$ be the property of all graphs satisfying
%	$z(G) := \sum_{H}{w_H \cdot d(H,G)} \leq b$, where the sum is over all $h$-vertex graphs. If $\mathcal{P}$ has a canonical tester with sample complexity $q = q(\varepsilon) \geq 1/\varepsilon$ (and success probability $\frac{2}{3}$), then for every sufficiently small $\varepsilon$, every graph $G$ on
%	$n \geq n_0(\varepsilon)$ vertices which is $\varepsilon$-far from $\mathcal{P}$, satisfies $z(G) \geq b + \frac{1}{2}q^{-4h}$.
%\end{lemma}
\begin{proof}[Proof of Theorem \ref{thm:removal}]
	Let $(h,w,b)$ be a tuple for which $\Pi_{h,w,b}$ has a size-oblivious POT. 
	As mentioned in the introduction, a POT for $\Pi_{h,w,b}$ can be used to obtain a standard tester for $\Pi_{h,w,b}$ by invoking the POT an appropriate number of times. Moreover, it is clear that if the POT is size-oblivious, then so is the resulting tester. Hence, $\Pi_{h,w,b}$ has a size-oblivious standard tester $\mathcal{T}'$ (whose query complexity is independent of the size of the input). Next, we apply to $\mathcal{T}'$ the transformation of Goldreich and Trevisan \cite{GT} to obtain a canonical tester $\mathcal{T}$ for $\Pi_{h,w,b}$. Since this transformation preserves the property of being size-oblivious, $\mathcal{T}$ is size-oblivious, and hence satisfies the condition of Lemma \ref{lem:exponential_error_probability}. 
	Denote by $s = s(\varepsilon)$ the sample complexity of $\mathcal{T}$. We may and will assume that $s$ is large enough as a function of the parameters $h$ and $b$. 
	
%	Let us denote $z(G) = \sum_{H}{w_H \cdot p(H,G)}$.
%	By multiplying the inequality $\sum_{H}{w_H \cdot p(H,G)} \leq b$ by an appropriate integer, we can assume without loss of generality that $b$ is an integer, and that $w_H$ is an integer for every $H$.
%	
%	Let $G$ be a graph which is $\varepsilon$-far from $\Pi_{h,w,b}$. Our goal is to show that $z(G) \geq b + f(\varepsilon)$, for a function $f : (0,1] \rightarrow (0,1]$ to be chosen later. 
%	By Lemma \ref{lem:exponential_error_probability}, a randomly chosen
%	$U \in \binom{V(G)}{s^4}$ satisfies $G[U] \in \Pi_{h,w,b}$ with probability at most $e^{-\Omega(s)}$. Observe that if a $k$-vertex graph $K$ does not satisfy $\Pi_{h,w,b}$, then necessarily
%	$$
%	z(K) = \sum_{H}{w_H \cdot p(H,K)} \geq b + \binom{k}{h}^{-1} > b + k^{-h}\;,
%	$$
%	as $b$ and all weights $w_H$ are integers.
%	Thus, if $G[U] \notin \Pi_{h,w,b}$ then
%	$$
%	z(G[U]) > b + |U|^{-h} = b + s^{-4h}\;.
%	$$ 
%	Observe (crucially) that $z(G)$ is the average of $z(G[U])$ over all
%	$U \in \binom{V(G)}{s^4}$. Thus, using the guarantees of Lemma \ref{lem:exponential_error_probability}, we obtain
%	$$
%	z(G) \geq (1 - e^{-\Omega(s)})(b + s^{-4h}) > b + \frac{1}{2}s^{-4h},
%	$$
%	where the last inequality holds provided that $s$ is large enough as a function of $h$ and $b$. So we may take the function $f$ in Definition \ref{def:removal} to be $f(\varepsilon) = \frac{1}{2}s(\varepsilon)^{-4h}$. This completes the proof.

Put $z(G) := \sum_{H}{w_H \cdot p(H,G)}$.
By multiplying the inequality $\sum_{H}{w_H \cdot p(H,G)} \leq b$ by an appropriate integer, we may assume, without loss of generality, that $b$ and all weights $(w_H : H)$ are \nolinebreak integers.

% Let $G$ be a graph which is $\varepsilon$-far from $\Pi_{h,w,b}$. Our goal is to show that $z(G) \geq b + f(\varepsilon)$, for a function $f : (0,1] \rightarrow (0,1]$ to be chosen later. 
% By Lemma \ref{lem:exponential_error_probability}, a randomly chosen
% $U \in \binom{V(G)}{s^4}$ satisfies $G[U] \notin \Pi_{h,w,b}$ with probability at least $1 - e^{-\Omega(s)}$. Observe that if a $k$-vertex graph $K$ does not satisfy $\Pi_{h,w,b}$, then necessarily
% $$
% z(K) = \sum_{H}{w_H \cdot p(H,K)} \geq b + \binom{k}{h}^{-1} > b + k^{-h}\;,
% $$
% as $b$ and all weights $w_H$ are integers.
% Thus, if $G[U] \notin \Pi_{h,w,b}$ then
% $$
% z(G[U]) > b + |U|^{-h} = b + s^{-4h}\;.
% $$ 
% Observe (crucially) that $z(G)$ is the average of $z(G[U])$ over all $U \in \binom{V(G)}{s^4}$. Thus, using the guarantees of Lemma \ref{lem:exponential_error_probability}, we obtain
% $$
% z(G) \geq (1 - e^{-\Omega(s)})(b + s^{-4h}) > b + \frac{1}{2}s^{-4h},
% $$
% where the last inequality holds provided that $s$ is large enough as a function of $h$ and $b$. So we may take the function $f$ in Definition \ref{def:removal} to be $f(\varepsilon) = \frac{1}{2}s(\varepsilon)^{-4h}$. This completes the proof.

Let $\varepsilon \in (0,1]$ and let $G$ be a graph which is $\varepsilon$-far from $\Pi_{h,w,b}$. Our goal is to show that $z(G) \geq b + f(\varepsilon)$, for a function $f : (0,1] \rightarrow (0,1]$ to be chosen later. Suppose first that $n < s^4$. As $G$ does not satisfy $\Pi_{h,w,b}$, we have $z(G) = \sum_{H}{w_H \cdot p(H,G)} > b$. Now, since $b$ and $(w_H : H)$ are all integers, and as $p(H,G)$ is an integer multiple of $\binom{n}{h}^{-1}$ for every $H$, we must have 
$$
z(G) \geq b + \binom{n}{h}^{-1} > b + n^{-h} > b + s^{-4h}, 
$$
implying that our assertion holds with $f(\varepsilon) = s(\varepsilon)^{-4h}$ in this case.

Suppose now that $n \geq s^4$, which is necessary in order to apply Lemma \ref{lem:exponential_error_probability}.
By Lemma \ref{lem:exponential_error_probability}, a randomly chosen
$U \in \binom{V(G)}{s^4}$ satisfies $G[U] \notin \Pi_{h,w,b}$ with probability at least $1 - e^{-\Omega(s)}$. As before, we observe that if a $k$-vertex graph $K$ does not satisfy $\Pi_{h,w,b}$, then necessarily
$$
z(K) = \sum_{H}{w_H \cdot p(H,K)} \geq b + \binom{k}{h}^{-1} > b + k^{-h}\;,
$$
as $b$ and all weights $w_H$ are integers.
Thus, if $G[U] \notin \Pi_{h,w,b}$ then
$$
z(G[U]) > b + |U|^{-h} = b + s^{-4h}\;.
$$ 
Observe (crucially) that $z(G)$ is the average of $z(G[U])$ over all
$U \in \binom{V(G)}{s^4}$. Thus, using the guarantees of Lemma \ref{lem:exponential_error_probability}, we obtain
$$
z(G) \geq (1 - e^{-\Omega(s)})(b + s^{-4h}) > b + \frac{1}{2}s^{-4h},
$$
where the last inequality holds provided that $s$ is large enough as a function of $h$ and $b$. So we may take the function $f$ in Definition \ref{def:removal} to be $f(\varepsilon) = \frac{1}{2}s(\varepsilon)^{-4h}$. This completes the proof.
\end{proof}
\begin{proof}[Proof of Corollary \ref{cor:removal}]
	We have established that $(h,w,b)$ satisfies the removal property if and only if $\Pi_{h,w,b}$ has a size-oblivious POT. The ``only if" part was explained in the introduction (see also \cite{GS}), and the ``if" part is the statement of Theorem \ref{thm:removal}. Since the existence of a tester (specifically, a size-oblivious POT) does not depend on the specific representation of a given property as $\Pi_{h,w,b}$, it is now clear that the corollary holds.
\end{proof}

\paragraph{Acknowledgments}
We would like to thank two anonymous referees for helpful comments which improved the presentation in this paper, and for encouraging us to improve the lower bound given by Theorem \ref{thm:counterexample} to a polynomial lower bound (i.e. $n^{\Omega(1)}$).

\end{document}